\newtheorem{theorem}{Theorem}[section]
\newtheorem{proposition}[theorem]{Proposition}
\newtheorem{lemma}[theorem]{Lemma}
\newtheorem{corollary}[theorem]{Corollary}
\theoremstyle{definition}
\newtheorem{problem}[theorem]{Problem}
\theoremstyle{remark}
\numberwithin{equation}{section}
\newcommand{\Q}{\mathbb{Q}}
\newcommand{\C}{\mathcal{C}}
\newcommand{\sgn}{\operatorname{sgn}}
\title[Tight complexes are Golod]{Tight complexes are Golod}
\author{Kouyemon Iriye}
\address{Department of Mathematics, Osaka Metropolitan University, Sakai, 599-8531, Japan}
\email{kiriye@omu.ac.jp}
\author{Daisuke Kishimoto}
\address{Faculty of Mathematics, Kyushu University, Fukuoka 819-0395, Japan}
\email{kishimoto@math.kyushu-u.ac.jp}
\date{\today}
\subjclass[2020]{13F55, 55S30, 57Q15}
\keywords{Golod complex, tight complex, manifold triangulation, Massey product, higher prism operator}
\begin{document}

\maketitle

\begin{abstract}
  Golodness of a simplicial complex is defined algebraically in terms of the Stanley-Reisner ring, and it has been a long-standing problem to find its combinatorial characterization. Tightness of a simplicial complex is a combinatorial analogue of a tight embedding of a manifold into Euclidean space, and has been studied in connection to minimal manifold triangulations. In this paper, by using an idea from toric topology, we prove that tight complexes are Golod, and as a corollary, we obtain that a triangulation of a closed connected orientable manifold is Golod if and only if it is tight, which is a combinatorial characterization of Golodness for manifold triangulations.
\end{abstract}


\section{Introduction}\label{introduction}

Let $K$ be a simplicial complex, and let $\mathbb{F}[K]$ denote the Stanley-Reisner ring of $K$ over a field $\mathbb{F}$. The following problem is of particular interest.

\begin{problem}
  \label{problem}
  Given an algebraic property of $\mathbb{F}[K]$, find a combinatorial characterization of it.
\end{problem}

There are some cases where Problem \ref{problem} is resolved. A typical example is Cohen-Macaulayness which is characterized by Reisner's criterion. In this paper, we consider Golodness in Problem \ref{problem}. Let $\mathbb{F}$ be a field, and let $S=\mathbb{F}[x_1,\ldots,x_n]$ where we set each $x_i$ to be of degree 2. Serre \cite{S} proved that for a noetherian ring $R=S/I$ where $I$ is a homogeneous ideal of $S$ such that $I\subset(x_1,\ldots,x_n)^2$, there is a coefficient-wise inequality
\[
  P(\mathrm{Tor}^R(\mathbb{F},\mathbb{F});t)\le\frac{(1+t^2)^n}{1-t(P(\mathrm{Tor}^S(R,\mathbb{F});t)-1)}
\]
where $P(V;t)$ denotes the Poincar\'e series of a graded vector space $V$ in the indeterminate $t$. If equality holds, then $R$ is called \emph{Golod}. Later, it was Golod who proved that $R$ is Golod if and only if all products and (higher) Massey products in the Koszul homology of $R$ (with respect to $x_1,\ldots,x_n$) are trivial. Therefore Golodness is a fairly complex property.

 We say that $K$ is \emph{$\mathbb{F}$-Golod} if $\mathbb{F}[K]$ is Golod. As Golodness is quite complicated as mentioned above, Problem \ref{problem} for Golodness has been widely open for a long time. Recently, toric topology provided new techniques and viewpoints for attacking Problem \ref{problem} for Golodness. More precisely, as in \cite{BBP}, a space called the moment-angle complex for $K$, or a more general polyhedral product, realizes the Koszul complex of $\mathbb{F}[K]$ as its cellular cochain complex, up to homotopy, and so topological techniques apply to produce new results on Golodness of a simplicial complex as in \cite{BG,GPTW,GT,GW,IK0,IK1,IK2,IK3,IK4,IY,L}. Using this connection of topology to combinatorics, other results in combinatorics and topology were obtained as in \cite{AP,HKK,IK5,IK6,IKL,KMY,Ki,KL}. See the survey \cite{BBC} and references therein for more details and results. Remarkably, we can get a new point of view that $\mathbb{F}$-Golod manifold triangulations might be connected to the minimality of triangulations as follows. As in \cite{GPTW,IK4}, a triangulation of a circle is $\mathbb{F}$-Golod if and only if it is a triangle which is the minimal triangulation of a circle, and as in \cite{IK1}, a triangulation of a connected closed $\mathbb{F}$-orientable surface is $\mathbb{F}$-Golod if and only if it is neighborly, i.e. every two vertices are joined by an edge, where most neighborly triangulations are minimal as in \cite{Lu}.

We give the definition of a tight complex. For  a non-empty subset $I$ of the vertex set of a simplicial complex $K$, the full subcomplex of $K$ over $I$ is defined by
\[
  K_I=\{\sigma\in K\mid\sigma\subset I\}.
\]
A simplicial complex $K$ is called \emph{$\mathbb{F}$-tight} if the natural map
\[
  H_*(K_I;\mathbb{F})\to H_*(K;\mathbb{F})
\]
is injective for each non-empty subset $I$ of the vertex set of $K$. Tightness of a simplicial complex was introduced by K\"{u}hnel \cite{Ku} as a combinatorial analogue of a tight embedding of a manifold into the Euclidean space, and since then, it has been intensely studied. In particular, K\"{u}hnel and Lutz \cite{KL} conjectured that tight manifold triangulations are strongly minimal, that is, the number of simplices are minimal in each dimension, and this conjecture was verified in dimension $\le 3$ \cite{BDS,Ku,KL}. Then since we may expect a connection of Golodness of manifold triangulations and the minimality of triangulations, we may also expect a connection between Golodness and tightness for manifold triangulations, which is of particular interest because the two notions from different contexts might be equivalent. By using the moment-angle complex, the authors \cite{IK5} proved topologically that if $K$ is a triangulation of a closed connected $\mathbb{F}$-orientable 3-manifold, then $K$ is $\mathbb{F}$-Golod if and only if it is $\mathbb{F}$-tight. Moreover, the authors \cite{IK5} also proved that if a simplicial complex, not necessarily a manifold triangulation, is $\mathbb{F}$-tight, then it is weakly $\mathbb{F}$-Golod, that is, all products in the Koszul homology of $\mathbb{F}[K]$ are trivial.

In this paper, we will prove:

\begin{theorem}
  \label{main}
  Every $\mathbb{F}$-tight complex is $\mathbb{F}$-Golod.
\end{theorem}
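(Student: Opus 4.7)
\medskip
\noindent\textbf{Proof proposal.}
The plan is to work on the topological side via the moment-angle complex $\mathcal{Z}_K$, whose cochain complex (with $\mathbb{F}$-coefficients) is quasi-isomorphic to the Koszul complex of $\mathbb{F}[K]$ in a way that matches the $\mathrm{Tor}$-algebra product and higher Massey products on the two sides. By Golod's characterization, $\mathbb{F}$-Golodness of $K$ is equivalent to the vanishing of all products and all higher Massey products in $H^*(\mathcal{Z}_K;\mathbb{F})$. Via Hochster's decomposition $H^*(\mathcal{Z}_K;\mathbb{F}) \cong \bigoplus_{I} \widetilde{H}^*(K_I;\mathbb{F})$, the $\mathbb{F}$-tightness hypothesis dualizes to the statement that for every non-empty $I$ the restriction $\widetilde{H}^*(K;\mathbb{F}) \to \widetilde{H}^*(K_I;\mathbb{F})$ is surjective, so every class on a full subcomplex admits a global lift on $K$.

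Since the authors' earlier result cited from \cite{IK5} already shows that $\mathbb{F}$-tight complexes are weakly $\mathbb{F}$-Golod (all products in Koszul homology vanish), the genuinely new content to establish is the triviality of all higher Massey products. My approach would be to build, for any admissible tuple of Koszul-homology classes $\alpha_1,\dots,\alpha_n$, an explicit cochain-level defining system together with a bounding cochain realizing $\langle\alpha_1,\dots,\alpha_n\rangle = 0$. The first step is a translation of such a defining system into a collection of cochains on various full subcomplexes $K_I$; the tightness-induced lifts let one upgrade each piece to a cochain on $K$ itself, which can then be combined using the cup product.

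The key technical device, as signalled by the paper's keyword list, should be a \emph{higher prism operator}: a family of operators $P_n$ that extend the classical prism (which gives the chain homotopy between the two ends of a cylinder) to $n$-fold iterations, producing compatible null-homotopies for iterated cup products on $C^*(K;\mathbb{F})$. Equipped with such a family, one would prove inductively on $n$ that whenever the defining system for $\langle\alpha_1,\dots,\alpha_n\rangle$ exists, the operators $P_k$ for $k \le n$ produce a cochain whose coboundary is the representative of the Massey product, using tightness at each stage to guarantee that the required lower-order trivializing cochains, which a priori live on full subcomplexes, can be realized globally on $K$.

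The main obstacle, as I see it, is coherence: for an $n$-fold Massey product one must choose bounding cochains for every sub-product $\alpha_i\cdots\alpha_j$ simultaneously, with the choices compatible in a Stasheff-associahedron sense, so that the total combination is a genuine cocycle representing the Massey product and is then bounded. Setting up the higher prism operators so that they satisfy the identities enforcing this compatibility, and simultaneously interact correctly with the tightness-induced lifts across all relevant full subcomplexes, is the delicate step where I expect most of the work to reside. Once that machinery is in place, triviality of all higher Massey products, and hence $\mathbb{F}$-Golodness, should follow by induction on the Massey order.
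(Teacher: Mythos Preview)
Your outline matches the paper's strategy closely: pass to the Hochster-type dga $\mathcal{C}^*(K)$, use $\mathbb{F}$-tightness to lift cocycle representatives from each $K_{I_i}$ to $K$, and then build a global defining system $\{a_{i,j}\}$ on $\widetilde{C}^*(K)$ inductively via higher prism operators so that the final identity exhibits the Massey cocycle as a coboundary. Two refinements relative to your sketch: tightness is invoked only once, to lift the diagonal entries $a_{i,i}$, after which the induction on $j-i$ is purely formal and does not revisit tightness; and the higher prism operator is not an abstract gadget for iterated cup products but is attached to a concrete simplicial higher homotopy $H_I\colon K\otimes\Delta^q\to K^{\star(q+1)}$ deforming the partition map $\sigma\mapsto(\sigma\cap I_0)\sqcup\cdots\sqcup(\sigma\cap I_q)$ stepwise into the null-homotopic inclusion $K\hookrightarrow K^{\star(q+1)}$ --- this join picture is precisely the source of the null-homotopies you were looking for.
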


The converse of Theorem \ref{main} does not hold, in general. For example, let $K$ be a simplicial complex consisting of two vertices. Then the Koszul homology of $\mathbb{F}[K]$ is the exterior algebra generated by a single element of degree 3, and so $K$ is $\mathbb{F}$-Golod. But $K$ is not $\mathbb{F}$-tight because $\mathbb{F}$-tight complexes must be neighborly.

Now we restrict simplicial complexes, and consider the converse of Theorem \ref{main}. We say that a simplicial complex $K$ is \emph{weakly $\mathbb{F}$-Golod} if all products, not necessarily all (higher) Massey products, in the Koszul homology of $\mathbb{F}[K]$ are trivial. Clearly, if a simplicial complex is $\mathbb{F}$-Golod, then it is weakly $\mathbb{F}$-Golod. It is proved in \cite{IK5} that if a triangulation of a closed connected $\mathbb{F}$-orientable manifold is weakly $\mathbb{F}$-Golod, then it is $\mathbb{F}$-tight. So by Theorem \ref{main}, we get:

\begin{corollary}
  \label{tight=Golod}
  A triangulation of a closed connected $\mathbb{F}$-orientable manifold is $\mathbb{F}$-Golod if and only if it is $\mathbb{F}$-tight.
\end{corollary}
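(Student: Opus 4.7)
The plan is to assemble the biconditional directly from Theorem \ref{main} together with the result from \cite{IK5} that is recalled in the paragraph immediately preceding the corollary; no additional topological or combinatorial input is required. I expect no real obstacle, as the serious work has been concentrated in Theorem \ref{main}.

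For the implication "$\mathbb{F}$-tight $\Rightarrow$ $\mathbb{F}$-Golod," I would simply invoke Theorem \ref{main}, which establishes this implication for an arbitrary simplicial complex, hence in particular for any triangulation of a closed connected $\mathbb{F}$-orientable manifold. Note that this direction does not make any use of the manifold hypothesis.

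For the converse "$\mathbb{F}$-Golod $\Rightarrow$ $\mathbb{F}$-tight" in the manifold setting I would proceed in two short steps. First, unwinding the definitions, $\mathbb{F}$-Golodness of $K$ asserts the vanishing of every product and every higher Massey product in the Koszul homology of $\mathbb{F}[K]$; in particular every binary product vanishes, so $K$ is weakly $\mathbb{F}$-Golod. Second, for a triangulation of a closed connected $\mathbb{F}$-orientable manifold, the theorem from \cite{IK5} recalled just above the corollary states precisely that weak $\mathbb{F}$-Golodness implies $\mathbb{F}$-tightness, completing the argument. The manifold hypothesis is genuinely needed in this second direction, as the two-vertex example discussed immediately after Theorem \ref{main} already shows that without it $\mathbb{F}$-Golodness does not force $\mathbb{F}$-tightness (the complex is Golod but not even neighborly).
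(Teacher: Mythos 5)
Your proposal is correct and follows exactly the paper's own argument: Theorem \ref{main} gives one direction, and the other follows from the trivial implication that $\mathbb{F}$-Golod implies weakly $\mathbb{F}$-Golod combined with the result of \cite{IK5} that weakly $\mathbb{F}$-Golod manifold triangulations are $\mathbb{F}$-tight. Nothing further is needed.
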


By definition, it is hard to directly check Golodness. But by Corollary \ref{tight=Golod}, we get a criterion for Golodness, which is easy to check. Let $K$ be a triangulation of a connected manifold of dimension $d\ge 3$. We say that $K$ is \emph{tight-neighborly} if
\[
  \binom{m-d-1}{2}=\binom{d+2}{2}\dim H_1(K;\Q)
\]
where $m$ is the number of vertices of $K$. It is easy to check tight-neighborliness. As in \cite{BDS,DM}, if $K$ is tight-neighborly, then it is $\mathbb{F}$-tight for any field $\mathbb{F}$, where the converse holds in dimension $3$. Thus by Corollary \ref{tight=Golod}, if $K$ is tight-neighborly, then it is $\mathbb{F}$-Golod for any field $\mathbb{F}$.

It is quite hard to show whether or not a given weakly $\mathbb{F}$-Golod is $\mathbb{F}$-Golod because we need to check the triviality of all (higher) Massey products. But there are classes of simplicial complexes where $\mathbb{F}$-Golodness and weak $\mathbb{F}$-Golodness are equivalent \cite{IK1,IK2,IK4,IK5,Ka}. (Berglund and J\"{o}llenbeck \cite{BJ} claimed that it is always the case for every simplicial complex, but Katth\"{a}n \cite{Ka} proved that this claim is false.) By the result on weak Golodness and tightness for manifold triangulations mentioned above \cite{IK5} together with Theorem \ref{main}, we can find an interesting such class.

 \begin{corollary}
   A triangulation of a closed connected $\mathbb{F}$-orientable manifold is $\mathbb{F}$-Golod if and only if it is weakly $\mathbb{F}$-Golod.
 \end{corollary}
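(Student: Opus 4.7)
The plan is to combine Theorem \ref{main} of this paper with the result from \cite{IK5} relating weak Golodness and tightness for manifold triangulations, which was already invoked in the proof of Corollary \ref{tight=Golod}. The forward direction, that $\mathbb{F}$-Golod implies weakly $\mathbb{F}$-Golod, is immediate from the definitions: Golodness asserts the triviality of all products and all higher Massey products in the Koszul homology of $\mathbb{F}[K]$, which in particular forces the triviality of the ordinary product.

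For the nontrivial converse, I would let $K$ be a triangulation of a closed connected $\mathbb{F}$-orientable manifold and assume that $K$ is weakly $\mathbb{F}$-Golod. First, I would apply the result of \cite{IK5} that for such manifold triangulations weak $\mathbb{F}$-Golodness implies $\mathbb{F}$-tightness, obtaining that $K$ is $\mathbb{F}$-tight. Second, I would apply Theorem \ref{main} to conclude that the $\mathbb{F}$-tight complex $K$ is $\mathbb{F}$-Golod.

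In short, the corollary is a direct composition of Theorem \ref{main} with the cited implication from \cite{IK5}, and can equivalently be phrased as Corollary \ref{tight=Golod} with the $\mathbb{F}$-tight condition replaced by weak $\mathbb{F}$-Golodness. There is no genuine obstacle to overcome at this stage; the entire substance of the statement rests on Theorem \ref{main}, whose proof is the main contribution of this paper, together with the manifold-theoretic input from \cite{IK5}.
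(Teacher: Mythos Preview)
Your proposal is correct and matches the paper's own reasoning essentially verbatim: the paper derives this corollary directly from Theorem \ref{main} together with the implication from \cite{IK5} that weak $\mathbb{F}$-Golodness implies $\mathbb{F}$-tightness for closed connected $\mathbb{F}$-orientable manifold triangulations, with the forward direction being trivial. There is nothing to add.
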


We briefly explain the outline of the proof of Theorem \ref{main}. In Section \ref{the complex}, we will introduce the differential graded algebra (dga, for short) $\mathcal{C}^*(K)$ which is homotopy equivalent to the Koszul complex of $\mathbb{F}[K]$, whereas the dga $\mathcal{C}^*(K)$ is isomorphic with the cellular cochain complex of the moment-angle complex for $K$. Then we need to prove that all products and (higher) Massey products in $H^*(\mathcal{C}^*(K))$ are trivial whenever $K$ is $\mathbb{F}$-tight. Every possibly non-trivial product of $\mathcal{C}^*(K)$ is given by the map
\[
  K_{I\sqcup J}\to K_I\star K_J,\quad\sigma\mapsto(\sigma\cap I)\sqcup(\sigma\cap J)
\]
in cohomology for disjoint non-empty subsets $I,J$ of the vertex set of $K$, where $K_1\star K_2$ denotes the join of simplicial complexes $K_1$ and $K_2$. The composite of it with the inclusion $K_I\star K_J\to K\star K$ is null-homotopic because it is homotopic to the inclusion into the first factor $K\subset K\star K$, which is null-homotopic. If $K$ is $\mathbb{F}$-tight, then the triviality of the composite $K_{I\cup J}\to K\star K$ in cohomology implies the triviality of the map $K_{I\sqcup J}\to K_I\star K_J$ in cohomology, implying the triviality of all products in $H^*(\mathcal{C}^*(K))$. We will generalize this argument to (higher) Massey products by considering higher homotopies. Specifically, the map
\[
  K_{I_0\sqcup\cdots\sqcup I_q}\to K^{\star(q+1)},\quad\sigma\mapsto(\sigma\cap I_0)\sqcup\cdots\sqcup(\sigma\cap I_q)
\]
for pairwise disjoint non-empty subsets $I_0,\ldots,I_q$ of the vertex set of $K$ can be deformed into $K^{\star(q+1)}\supset K^{\star q}\supset\cdots\supset K^{\star 2}\supset K$ in order by a higher homotopy that we will construct in Section \ref{higher prism operator}. If $K$ is $\mathbb{F}$-tight, then we will see that the triviality of $(q+1)$-fold Massey products can be deduced by considering the higher prism operator corresponding to the above higher homotopy, which will be constructed in Section \ref{higher prism operator} and applied in Section \ref{proof of theorem}.


\subsection*{Acknowledgements}

The authors were supported by JSPS KAKENHI Grant Numbers JP19K03473 and JP17K05248.


\section{Higher prism operator}\label{higher prism operator}

In this section, we define higher prism operators, by which we will control (higher) Massey products. Hereafter, algebras and modules will be considered over a field $\mathbb{F}$. We set notation that we are going to use throughout the paper. Let $K$ and $L$ be simplicial complexes. We define the join of $K$ and $L$ by
\[
  K\star L=\{\sigma\sqcup\tau\mid\sigma\in K,\,\tau\in L\}.
\]
Let $K^{\star n}$ denote the join of $n$ copies of $K$. Let $C^*(K)$ denote the cochain complex of a simplicial complex $K$. Then for $a\in C^p(K)$ and $b\in C^q(L)$, we can define $a\star b\in C^{p+q+1}(K\star L)$ by $(a\star b)(\sigma\sqcup\tau)=a(\sigma)b(\tau)$ for $\sigma\in K$ and $\tau\in L$, and so we get a homomorphism
\[
  \star\colon C^p(K)\otimes C^q(L)\to C^{p+q+1}(K\star L),\quad a\otimes b\mapsto a\star b.
\]
Let $V(K)$ denote the vertex set of a simplicial complex $K$. For $\emptyset\ne I\subset V(K)$, let
\[
  j_I\colon K_I\to K
\]
denote the inclusion. For $\emptyset\ne I,J\subset V(K)$ with $I\cap J=\emptyset$, we define an injective simplicial map
\[
  \iota_{I,J}\colon K_{I\cup J}\to K_I\star K_J,\quad\sigma\mapsto(\sigma\cap I)\sqcup(\sigma\cap J).
\]

To define a prism operator, we need shuffles. Recall that a $(p,q)$-shuffle is a sequence $(s_0,s_1,\ldots,s_{q+1})$ satisfying $0=s_0\le s_1\le\cdots\le s_q\le s_{q+1}=p$. Let $\mathcal{S}(p,q)$ denote the set of all $(p,q)$-shuffles. It is useful to think of a $(p,q)$-shuffle $(s_0,s_1,\ldots,s_{q+1})$ as the following lattice path in $\{0,1,\ldots,p\}\times\{0,1,\ldots,q\}$.

\begin{figure}[H]
  \centering
\begin{tikzpicture}[x=0.8cm, y=0.8cm, thick]
  \draw(0,-4)--(1.5,-4);
  \draw[dashed](1.5,-4)--(2.5,-4);
  \draw(2.5,-4)--(3,-4)--(3,-3)--(4.5,-3);
  \draw[dashed](4.5,-3)--(5.5,-3);
  \draw(5.5,-3)--(6,-3)--(6,-2)--(6.5,-2);
  \draw[dashed](6.5,-2)--(7.5,-1);
  \draw(7.5,-1)--(8,-1)--(8,0)--(9.5,0);
  \draw[dashed](9.5,0)--(10.5,0);
  \draw(10.5,0)--(11,0);
  \draw[fill=black](0,-4)circle(2pt);
  \draw(-0.9,-4.5)node{\footnotesize$(0,0)=(s_0,0)$};
  \draw[fill=black](1,-4)circle(2pt);
  \draw(1.3,-4.5)node{\footnotesize$(s_0+1,0)$};
  \draw[fill=black](3,-4)circle(2pt);
  \draw(3,-4.5)node{\footnotesize$(s_1,0)$};
  \draw[fill=black](3,-3)circle(2pt);
  \draw(3,-2.5)node{\scriptsize$(s_1,1)$};
  \draw[fill=black](4,-3)circle(2pt);
  \draw(4,-3.5)node{\footnotesize$(s_1+1,1)$};
  \draw[fill=black](6,-3)circle(2pt);
  \draw(6,-3.5)node{\footnotesize$(s_2,1)$};
  \draw[fill=black](6,-2)circle(2pt);
  \draw(6,-1.5)node{\footnotesize$(s_2,2)$};
  \draw[fill=black](8,-1)circle(2pt);
  \draw(8.3,-1.5)node{\footnotesize$(s_q,q-1)$};
  \draw[fill=black](8,0)circle(2pt);
  \draw(7.7,0.5)node{\footnotesize$(s_q,q)$};
  \draw[fill=black](9,0)circle(2pt);
  \draw(9.2,0.5)node{\footnotesize$(s_q+1,q)$};
  \draw[fill=black](11,0)circle(2pt);
  \draw(11.55,0.5)node{\footnotesize$(s_{q+1},q)=(p,q)$};
\end{tikzpicture}
\caption{}
\label{lattice path picture}
\end{figure}

\noindent For example, the latice path corresponding to the $(6,3)$-shuffle $(0,2,3,3,6)$ is given by the following figure.

\begin{figure}[htbp]
  \centering
\begin{tikzpicture}[x=0.8cm, y=0.8cm, thick]
  \draw(0,-3)--(2,-3)--(2,-2)--(3,-2)--(3,0)--(6,0);
  \draw[fill=black](0,-3)circle(2pt);
  \draw(0,-3.5)node{$(0,0)$};
  \draw[fill=black](1,-3)circle(2pt);
  \draw(1,-3.5)node{$(1,0)$};
  \draw[fill=black](2,-3)circle(2pt);
  \draw(2,-3.5)node{$(2,0)$};
  \draw[fill=black](2,-2)circle(2pt);
  \draw(2,-1.5)node{$(2,1)$};
  \draw[fill=black](3,-2)circle(2pt);
  \draw(3,-2.5)node{$(3,1)$};
  \draw[fill=black](3,-1)circle(2pt);
  \draw(3.75,-1)node{$(3,2)$};
  \draw[fill=black](3,0)circle(2pt);
  \draw(3,0.55)node{$(3,3)$};
  \draw[fill=black](4,0)circle(2pt);
  \draw(4,0.55)node{$(4,3)$};
  \draw[fill=black](5,0)circle(2pt);
  \draw(5,0.55)node{$(5,3)$};
  \draw[fill=black](6,0)circle(2pt);
  \draw(6,0.55)node{$(6,3)$};
\end{tikzpicture}
\caption{}
\label{lattice path picture special}
\end{figure}

\noindent Let $s=(s_0,\ldots,s_{q+1})\in\mathcal{S}(p,q)$. We define
\begin{align*}
  s(\rightarrow)
  &=\{0<i<p+q\mid s_k+k<i<s_{k+1}+k\text{ for some }0\le k\le q\}\cup A\cup B\\
  s(\uparrow)
  &=\{0\le i<p+q\mid i=s_k+k\text{ and }s_k=s_{k+1}\text{ for some }0\le k\le q\}\cup C\\
  s(\rotatebox[origin=c]{-90}{$\Lsh$})
  &=\{0<i<p+q\mid i=s_k+k-1\text{ and }s_{k-1}<s_k\text{ for some }0\le k\le q+1\}\\
  s(\Rsh)
  &=\{0<i<p+q\mid i=s_k+k\text{ and }s_k<s_{k+1}\text{ for some }0\le k\le q\}
\end{align*}
where we set
\[
  A=
  \begin{cases}
    \{0\}&s_0<s_1\\
    \emptyset&s_0=s_1
  \end{cases}
  \quad B=
  \begin{cases}
    \{p+q\}&s_q<s_{q+1}\\
    \emptyset&s_q=s_{q+1}
  \end{cases}
  \quad C=
  \begin{cases}
    \emptyset&s_q<s_{q+1}\\
    \{p+q\}&s_q=s_{q+1}.
  \end{cases}
\]
Then each $0\le i\le p+q$ belongs to one of these sets, and for $0<i<p+q$, elements of these sets are the $i$-th vertex of the lattice path of Figure \ref{lattice path picture} depicted as the black vertex in the following figure.

\begin{figure}[H]
  \centering
\begin{tikzpicture}[x=0.8cm, y=0.8cm, thick]
  \draw(0.5,0)--(2.5,0);
  \draw[fill=white](0.5,0)circle(2pt);
  \draw[fill=black](1.5,0)circle(2pt);
  \draw[fill=white](2.5,0)circle(2pt);
  \draw(1.6,1.8)node{$s(\rightarrow)$};
  %
  \draw(5.5,1)--(5.5,-1);
  \draw[fill=white](5.5,1)circle(2pt);
  \draw[fill=black](5.5,0)circle(2pt);
  \draw[fill=white](5.5,-1)circle(2pt);
  \draw(5.55,1.8)node{$s(\uparrow)$};
  %
  \draw(9.5,0.5)--(9.5,-0.5)--(8.5,-0.5);
  \draw[fill=white](9.5,0.5)circle(2pt);
  \draw[fill=black](9.5,-0.5)circle(2pt);
  \draw[fill=white](8.5,-0.5)circle(2pt);
  \draw(9.1,1.8)node{$s(\rotatebox[origin=c]{-90}{$\Lsh$})$};
  %
  \draw(13.5,0.5)--(12.5,0.5)--(12.5,-0.5);
  \draw[fill=white](13.5,0.5)circle(2pt);
  \draw[fill=black](12.5,0.5)circle(2pt);
  \draw[fill=white](12.5,-0.5)circle(2pt);
  \draw(13,1.8)node{$s(\Rsh)$};
\end{tikzpicture}
\end{figure}

\noindent Moreover, $0$ and $p+q$ belong to either $s(\rightarrow)$ or $s(\uparrow)$ if they are the black vertices in the following figure.

\begin{figure}[H]
  \centering
\begin{tikzpicture}[x=0.8cm, y=0.8cm, thick]
  \draw(0,0)--(1,0);
  \draw[fill=black](0,0)circle(2pt);
  \draw(-0.4,0.5)node{$(0,0)$};
  \draw[fill=white](1,0)circle(2pt);
  \draw(1.3,0.5)node{$(1,0)$};
  \draw(0.55,2.2)node{$s(\rightarrow)$};
  \draw(0.6,-1)node{$s_0<s_1$};
  \draw(5,1)--(5,0);
  \draw[fill=black](5,1)circle(2pt);
  \draw(4.1,1.1)node{$(0,0)$};
  \draw[fill=white](5,0)circle(2pt);
  \draw(4.1,-0.1)node{$(0,1)$};
  \draw(5.05,2.2)node{$s(\uparrow)$};
  \draw(5,-1)node{$s_0=s_1$};
  \draw(8,0)--(9,0);
  \draw[fill=white](8,0)circle(2pt);
  \draw(7.2,0.5)node{$(p-1,q)$};
  \draw[fill=black](9,0)circle(2pt);
  \draw(9.4,0.5)node{$(p,q)$};
  \draw(8.55,2.2)node{$s(\rightarrow)$};
  \draw(8.7,-1)node{$s_q<s_{q+1}$};
  \draw(13,1)--(13,0);
  \draw[fill=white](13,1)circle(2pt);
  \draw(11.7,1.1)node{$(p,q-1)$};
  \draw[fill=black](13,0)circle(2pt);
  \draw(12.1,-0.1)node{$(p,q)$};
  \draw(13.05,2.2)node{$s(\uparrow)$};
  \draw(13.25,-1)node{$s_q=s_{q+1}$};
\end{tikzpicture}
\end{figure}

\noindent Then we have
\[
  \{0,1,\ldots,p+q\}=s(\rightarrow)\sqcup s(\uparrow)\sqcup s(\rotatebox[origin=c]{-90}{$\Lsh$})\sqcup s(\Rsh)
\]
where this partition characterizes the shuffle $s$. For example, for a $(6,3)$-shuffle $s=(0,2,3,3,6)$, we have
\[
  s(\rightarrow)=\{0,1,7,8,9\},\quad s(\uparrow)=\{5\},\quad s(\rotatebox[origin=c]{-90}{$\Lsh$})=\{2,4\},\quad s(\Rsh)=\{3,6\}.
\]
where the corresponding lattice path is as in Figure \ref{lattice path picture special}. For $i\in s(\rotatebox[origin=c]{-90}{$\Lsh$})$ with $i=s_k+k-1$, we define a $(p,q)$-shuffle
\[
  \alpha_i(s)=(s_0,\ldots,s_{k-1},s_k-1,s_{k+1},\ldots,s_{q+1}).
\]
For $j\in s(\Rsh)$ with $j=s_l+l$, we also define a $(p,q)$-shuffle
\[
  \beta_j(s)=(s_0,\ldots,s_{l-1},s_l+1,s_{l+1},\ldots,s_{q+1}).
\]
Then the lattice path corresponding to $\alpha_i(s)$ (resp. $\beta_j(s)$) is obtained by flipping the corner at the $i$-th (resp. $j$-th) vertex in the lattice path corresponding to the $(p,q)$-shuffle $s$.

\begin{lemma}
  \label{flip}
  The map
  \[
    \coprod_{s\in\mathcal{S}(p,q)}s(\rotatebox[origin=c]{-90}{$\Lsh$})\times s\to\coprod_{s\in\mathcal{S}(p,q)}s(\Rsh)\times s,\quad(i,s)\mapsto(i,\alpha_i(s))
  \]
  is bijective.
\end{lemma}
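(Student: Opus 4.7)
The plan is to exhibit an explicit two-sided inverse. Concretely, I would show that the candidate inverse
\[
  \coprod_{s\in\mathcal{S}(p,q)}s(\Rsh)\times\{s\}\to\coprod_{s\in\mathcal{S}(p,q)}s(\rotatebox[origin=c]{-90}{$\Lsh$})\times\{s\},\quad(j,s')\mapsto(j,\beta_j(s'))
\]
is well-defined and inverse to the map in the statement. The geometric picture is that both $\alpha_i$ and $\beta_j$ flip a single corner of the associated lattice path at a prescribed position --- a right-then-up corner becoming up-then-right, or vice versa --- so the two operations are visibly inverse to one another; the real work is to check that the combinatorial indexing behaves as expected.

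The first step is to verify well-definedness in each direction. Suppose $i\in s(\rotatebox[origin=c]{-90}{$\Lsh$})$ with $i=s_k+k-1$ and $s_{k-1}<s_k$, and write $\tilde{s}=\alpha_i(s)$, so that $\tilde{s}_k=s_k-1$ and $\tilde{s}_j=s_j$ for $j\neq k$. A direct check yields $i=\tilde{s}_k+k$ together with $\tilde{s}_k<\tilde{s}_{k+1}$ (using $s_k\le s_{k+1}$), so $i\in\tilde{s}(\Rsh)$. Showing that $j\in\beta_j(s')(\rotatebox[origin=c]{-90}{$\Lsh$})$ whenever $j\in s'(\Rsh)$ is entirely symmetric, using $s'_{l-1}\le s'_l<s'_l+1$ in place of $s_k\le s_{k+1}$.

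The second step is to check that the two maps compose to the identity. For $(i,s)$ with $i=s_k+k-1\in s(\rotatebox[origin=c]{-90}{$\Lsh$})$, the index at which $i$ lies in $\alpha_i(s)(\Rsh)$ was identified above to be $l=k$, so $\beta_i(\alpha_i(s))$ restores the $k$-th entry from $s_k-1$ back to $s_k$, recovering $s$; the reverse composition is analogous. There is no serious obstacle here. The only subtlety worth flagging is the asymmetry of shifts --- $s_k+k-1$ in the definition of $s(\rotatebox[origin=c]{-90}{$\Lsh$})$ versus $s_l+l$ in the definition of $s(\Rsh)$ --- which forces the identification $l=k$ when passing through $\alpha_i$, and $k=l$ when passing through $\beta_j$. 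Once this indexing is tracked, bijectivity follows immediately.
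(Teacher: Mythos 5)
Your proposal is correct and follows essentially the same route as the paper: the paper's proof also exhibits $(j,s)\mapsto(j,\beta_j(s))$ as the two-sided inverse, noting $i\in\alpha_i(s)(\Rsh)$, $j\in\beta_j(s)(\rotatebox[origin=c]{-90}{$\Lsh$})$, and $\beta_i(\alpha_i(s))=s=\alpha_j(\beta_j(s))$. Your index-tracking (the shift by one between the two corner types forcing $l=k$) is exactly the detail the paper leaves to the reader.
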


\begin{proof}
  By definition, for $i\in s(\rotatebox[origin=c]{-90}{$\Lsh$})$ and $j\in s(\Rsh)$, we have $i\in \alpha_i(s)(\Rsh)$ and $j\in \beta_j(s)(\rotatebox[origin=c]{-90}{$\Lsh$})$. Moreover, we have
  \[
    \beta_i(\alpha_i(s))=s\quad\text{and}\quad\alpha_j(\beta_j(s))=s.
  \]
  Then the map
  \[
    \coprod_{s\in\mathcal{S}(p,q)}s(\Rsh)\times s\to\coprod_{s\in\mathcal{S}(p,q)}s(\rotatebox[origin=c]{-90}{$\Lsh$})\times s,\quad(i,s)\mapsto(i,\beta_i(s))
  \]
  is the inverse of the map in the statement.
\end{proof}

For $k=0,1,\ldots,q$, we define maps
\[
  \lambda_k\colon\mathcal{S}(p-1,q)\to\mathcal{S}(p,q),\quad(s_0,\ldots,s_{q+1})\mapsto(s_0,\ldots,s_k,s_{k+1}+1,\ldots,s_{q+1}+1)
\]
and
\[
  \nu_k\colon\mathcal{S}(p,q-1)\to\mathcal{S}(p,q),\quad(s_0,\ldots,s_q)\mapsto(s_0,\ldots,s_{k-1},s_k,s_k,s_{k+1},\ldots,s_q).
\]
Clearly, the maps $\lambda_k$ and $\nu_k$ are injective. For $s=(s_0,\ldots,s_q)\in\mathcal{S}(p,q)$, we set
\[
  \sgn(s)=(-1)^{s_1+\cdots+s_q}.
\]
Then for $s=(s_0,\ldots,s_{q+1})\in\mathcal{S}(p-1,q)$ and $t=(t_0,\ldots,t_q)\in\mathcal{S}(p,q-1)$, we have
\begin{equation}
  \label{sign}
  \sgn(\lambda_k(s))=(-1)^{q-k}\sgn(s)\quad\text{and}\quad\sgn(\nu_k(t))=(-1)^{t_k}\sgn(t).
\end{equation}

We say that a simplicial complex $K$ is ordered if $V(K)$ is equipped with a total order. Note that every simplicial complex can be an ordered simplicial complex, where any ordering will do for our purpose. Let $K$ and $L$ be ordered simplicial complexes. Then we equip $V(K)\times V(L)$ with the product order such that $(v_1,w_1)\le(v_2,w_2)\in V(K)\times V(L)$ if $v_1\le v_2$ and $w_1\le w_2$. We define the product of $K$ and $L$, denoted by $K\otimes L$, to be the simplicial complex with vertex set $V(K)\times V(L)$, whose simplices are chains $(v_0,w_0)<(v_1,w_1)<\cdots<(v_p,w_p)$ such that $\{v_0,\ldots,v_p\}\in K$ and $\{w_0,\ldots,w_p\}\in L$.

We will always assume that a $q$-simplex $\Delta^q$ is ordered in such a way that its vertex set is $\{0,1,\ldots,q\}$ with the usual ordering. Let $K$ be an ordered simplicial complex. For a $(p,q)$-shuffle $s=(s_0,\ldots,s_{q+1})$ and a $p$-simplex $\sigma=\{v_0<\cdots<v_p\}$ of $K$, we define a $(p+q)$-simplex of $K\otimes\Delta^q$ by
\[
  \sigma\odot s=[(v_{s_0},0),\ldots,(v_{s_1},0),(v_{s_1},1),\ldots,(v_{s_2},1),\ldots,(v_{s_q},q),\ldots,(v_{s_{q+1}},q)]
\]
which is quite similar to thinking of shuffles as lattice paths of Figure \ref{lattice path picture}. Let $L$ be a simplicial complex. Now, for a simplicial map $H\colon K\otimes\Delta^q\to L$, we define a higher prism operator $P_H\colon C_*(K)\to C_{*+q}(L)$ by
\[
  P_H(\sigma)=\sum_{s\in\mathcal{S}(p,q)}\sgn(s)H_*(\sigma\odot s).
\]
Observe that for $q=0$, $P_H=H_*$ and for $q=1$, $P_H$ is the usual prism operator. We prove a coherent property among higher prism operators.

\begin{lemma}
  \label{boundary condition}
  For a simplicial map $H\colon K\otimes\Delta^q\to L$, there is an identity
  \[
    \partial P_H+(-1)^{q-1}P_H\partial=\sum_{i=0}^q(-1)^iP_{H\circ(1\otimes d^i)}
  \]
  where $d^i\colon\Delta^{q-1}\to\Delta^q$ denotes the $i$-th coface operator.
\end{lemma}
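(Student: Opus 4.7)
The plan is to expand
\[
\partial P_H(\sigma)=\sum_{s\in\mathcal{S}(p,q)}\sum_{i=0}^{p+q}(-1)^i\sgn(s)\,H_*(\partial_i(\sigma\odot s))
\]
for a $p$-simplex $\sigma\in K$, where $\partial_i$ denotes deletion of the $i$-th vertex, and to split the inner sum over $i$ according to the partition $\{0,1,\ldots,p+q\}=s(\rightarrow)\sqcup s(\uparrow)\sqcup s(\rotatebox[origin=c]{-90}{$\Lsh$})\sqcup s(\Rsh)$ already established. Each class corresponds to a distinct local shape of the lattice path of $s$ at its $i$-th vertex, and the four resulting pieces will be matched separately against the two sides of the identity.

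For $i\in s(\rotatebox[origin=c]{-90}{$\Lsh$})$ the $i$-th vertex of $\sigma\odot s$ sits at a corner of the lattice path, and a direct inspection of the vertex lists shows $\partial_i(\sigma\odot s)=\partial_i(\sigma\odot\alpha_i(s))$, since flipping the corner alters only the removed vertex. Combined with the identity $\sgn(\alpha_i(s))=-\sgn(s)$, which is immediate from the definition of $\sgn$, the contribution of $(i,s)$ cancels that of $(i,\alpha_i(s))$; Lemma \ref{flip} ensures this pairing is a bijection between the $\rotatebox[origin=c]{-90}{$\Lsh$}$- and $\Rsh$-families, so these two classes contribute $0$ in total. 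For $i\in s(\rightarrow)$ the $i$-th vertex lies on a horizontal run in some uniquely determined row $k$, so $s=\lambda_k(s')$ for a unique $s'\in\mathcal{S}(p-1,q)$; a direct vertex-list comparison gives $\partial_i(\sigma\odot s)=(\partial_{i-k}\sigma)\odot s'$, and by (\ref{sign}) the signs combine to $(-1)^i\sgn(s)=(-1)^q(-1)^{i-k}\sgn(s')$, while $(i,s)\mapsto(i-k,s')$ is a bijection onto $\{0,\ldots,p\}\times\mathcal{S}(p-1,q)$. Hence the $\rightarrow$-part contributes exactly $(-1)^q P_H(\partial\sigma)$. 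Dually, for $i\in s(\uparrow)$ the $i$-th vertex is the sole vertex of row $k$, so $s_k=s_{k+1}$ and $s=\nu_k(t)$ uniquely for $t\in\mathcal{S}(p,q-1)$ with $i=t_k+k$; another direct check yields $\partial_i(\sigma\odot s)=(1\otimes d^k)_*(\sigma\odot t)$, and by (\ref{sign}) the signs collapse to $(-1)^k\sgn(t)$, so the $\uparrow$-part contributes exactly $\sum_{k=0}^q(-1)^k P_{H\circ(1\otimes d^k)}(\sigma)$.

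Combining the three surviving pieces gives $\partial P_H(\sigma)=(-1)^q P_H(\partial\sigma)+\sum_{k=0}^q(-1)^k P_{H\circ(1\otimes d^k)}(\sigma)$, which is the claimed identity after moving $P_H\partial$ to the left-hand side (using $(-1)^{q-1}=-(-1)^q$). The substantive content lies in the three vertex-list identifications of $\partial_i(\sigma\odot s)$ in the corner, horizontal, and vertical cases; these are routine from the explicit lattice path description, but a little care is needed to include the boundary indices $i\in\{0,p+q\}$, which enter $s(\rightarrow)$ or $s(\uparrow)$ via the auxiliary sets $A$, $B$, $C$ in the definitions and are handled uniformly by permitting $k=0$ or $k=q$ in the $\lambda_k$/$\nu_k$ reconstructions. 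The main obstacle is therefore just organizing the four types of contributions so that the sign identities in (\ref{sign}) line up cleanly; once the decomposition is in place, each piece falls out by inspection.
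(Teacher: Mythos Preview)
Your proposal is correct and follows essentially the same argument as the paper: both expand $\partial P_H(\sigma)$, split the face index according to the four-way partition $s(\rightarrow)\sqcup s(\uparrow)\sqcup s(\rotatebox[origin=c]{-90}{$\Lsh$})\sqcup s(\Rsh)$, cancel the two corner classes against each other via Lemma~\ref{flip} and the sign flip $\sgn(\alpha_i(s))=-\sgn(s)$, and then identify the horizontal and vertical contributions with $(-1)^qP_H\partial$ and $\sum_k(-1)^kP_{H\circ(1\otimes d^k)}$ using the maps $\lambda_k$, $\nu_k$ and the sign relations~\eqref{sign}. The only cosmetic difference is that the paper writes the horizontal and vertical identifications as $\partial_j(\sigma\odot s)=(\partial_{j-k_j}\sigma)\odot\lambda_{k_j}^{-1}(s)$ and $\partial_j(\sigma\odot s)=(1\otimes d^{k_j})(\sigma\odot\nu_{k_j}^{-1}(s))$ rather than phrasing them as bijections, but the content is the same.
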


\begin{proof}
  Let $s=(s_0,\ldots,s_{q+1})$ be a $(p,q)$-shuffle. Then
  \begin{align*}
    &\partial P_H(\sigma)\\
    &=\sum_{j=0}^{p+q}(-1)^j\partial_j\left(\sum_{s\in\mathcal{S}(p,q)}\sgn(s)H_*(\sigma\odot s)\right)\\
    &=\sum_{j=0}^{p+q}(-1)^j\sum_{s\in\mathcal{S}(p,q)}\sgn(s)H_*(\partial_j(\sigma\odot s))\\
    &=\sum_{s\in\mathcal{S}(p,q)}\left(\sum_{j\in s(\rightarrow)}(-1)^j\sgn(s)H_*(\partial_j(\sigma\odot s))\right.+\sum_{j\in s(\uparrow)}(-1)^j\sgn(s)H_*(\partial_j(\sigma\odot s))\\
    &\quad\left.+\sum_{j\in s(\Rsh)}(-1)^j\sgn(s)H_*(\partial_j(\sigma\odot s))+\sum_{j\in s(\rotatebox[origin=c]{-90}{$\Lsh$})}(-1)^j\sgn(s)H_*(\partial_j(\sigma\odot s))\right).
  \end{align*}
  Let $j\in s(\rightarrow)$. If $0<j<p+q$, then $s_{k_j}+k_j<j<s_{k_j+1}+k_j$ for some $0\le k_j\le q$. For $j=0,p+q$, we put $k_j=0,q$, respectively. Then we have
  \begin{align*}
    \partial_j(\sigma\odot s)&=[(v_0,0),\dots,(v_{j-k_j-1},k_j),(v_{j-k_j+1},k_j),\dots,(v_p,q)]\\
    &=(\partial_{j-k_j}\sigma)\odot\lambda_{k_j}^{-1}(s).
  \end{align*}
  Let $j\in s(\uparrow)$. If $0< j<p+q$, then $j=s_{k_j}+k_j$ and $s_{k_j}=s_{k_j+1}$ for some $0\le k_j\le q$. For $j=0,p+q$, we put $k_j=0,q$, respectively. Then we have
  \begin{align*}
    \partial_j(\sigma\odot s)&=[(v_0,0),\dots,(v_{s_{k_j}},k_j-1),(v_{s_{k_j+1}},k_j+1),\dots,(v_p,q)]\\
    &=(1\otimes d^{k_j})(\sigma\odot\nu_{k_j}^{-1}(s))
  \end{align*}
  where $\lambda_k^{-1}(s)$ and $\nu_k^{-1}(s)$ make sense because $\lambda_k$ and $\nu_k$ are injective. Thus by \eqref{sign}, we get
  \begin{align*}
    &\sum_{s\in\mathcal{S}(p,q)}\left(\sum_{j\in s(\rightarrow)}(-1)^j\sgn(s)H_*(\partial_j(\sigma\odot s))+\sum_{j\in s(\uparrow)}(-1)^j\sgn(s)H_*(\partial_j(\sigma\odot s))\right)\\
    &=\sum_{s\in\mathcal{S}(p,q)}\left(\sum_{j\in s(\rightarrow)}(-1)^j\sgn(s)H_*((\partial_{j-k_j}\sigma)\odot\lambda_{k_j}^{-1}(s))\right.\\
    &\quad\left.+\sum_{j\in s(\uparrow)}(-1)^j\sgn(s)H_*((1\otimes d^{k_j})(\sigma\odot\nu_{k_j}^{-1}(s)))\right)\\
    &=(-1)^q\sum_{i=0}^p\sum_{s\in\mathcal{S}(p-1,q)}(-1)^i\sgn(s)H_*((\partial_i\sigma)\odot s)\\
    &\quad+\sum_{i=0}^q\sum_{s\in\mathcal{S}(p,q-1)}(-1)^i\sgn(s)H_*\circ(1\otimes d^i)(\sigma\odot s)\\
    &=(-1)^qP_H\partial(\sigma)+\sum_{i=0}^q(-1)^iP_{H\circ(1\otimes d^i)}(\sigma).
  \end{align*}
  For $j\in s(\rotatebox[origin=c]{-90}{$\Lsh$})$, we have
  \[
    \partial_j(\sigma\odot s)=\partial_j(\sigma\odot\alpha_j(s))\quad\text{and}\quad\sgn(s)=-\sgn(\alpha_j(s)).
  \]
  Then by Lemma \ref{flip}, we get
  \begin{align*}
    &\sum_{s\in\mathcal{S}(p,q)}\sum_{j\in s(\rotatebox[origin=c]{-90}{$\Lsh$})}(-1)^j\sgn(s)H_*(\partial_j(\sigma\odot s))\\
    &=-\sum_{s\in\mathcal{S}(p,q)}\sum_{j\in s(\rotatebox[origin=c]{-90}{$\Lsh$})}(-1)^j\sgn(\alpha_j(s))H_*(\partial_j(\sigma\odot\alpha_j(s)))\\
    &=-\sum_{s\in\mathcal{S}(p,q)}\sum_{j\in s(\Rsh)}(-1)^j\sgn(s)H_*(\partial_j(\sigma\odot s))
  \end{align*}
  implying
  \[
    \sum_{s\in\mathcal{S}(p,q)}\left(\sum_{j\in s(\Rsh)}(-1)^j\sgn(s)H_*(\partial_j(\sigma\odot s))+\sum_{j\in s(\rotatebox[origin=c]{-90}{$\Lsh$})}(-1)^j\sgn(s)H_*(\partial_j(\sigma\odot s))\right)=0.
  \]
  Thus we obtain
  \[
    \partial P_H(\sigma)=(-1)^qP_H\partial(\sigma)+\sum_{i=0}^q(-1)^iP_{H\circ(1\otimes d^i)}(\sigma)
  \]
  which is the desired identity.
\end{proof}

We define a higher homotopy between simplicial complexes that we are going to use. Let $K$ be an ordered simplicial complex, and let $I$ denote a partition $V(K)=I_0\sqcup I_1\sqcup\cdots\sqcup I_q$ such that $I_0,\ldots,I_q$ are non-empty and pairwise disjoint. For $i=0,1,\ldots,q$, we define a simplicial map $h_I^i\colon K\to K^{\star(q+1)}$ by
\[
  h_I^i(\sigma)=(\sigma\cap I_0)\sqcup\cdots\sqcup(\sigma\cap I_{i-1})\sqcup(\sigma\cap(I_i\sqcup\cdots\sqcup I_q))\sqcup\underbrace{\emptyset\sqcup\cdots\sqcup\emptyset}_{q-i}
\]
for a simplex $\sigma$ of $K$. Then we have:

\begin{lemma}
  \label{h_I}
  For $i=0,1,\ldots,q-1$, the image of $h_I^i$ is in $K^{\star q}\subset K^{\star q}\star K=K^{\star(q+1)}$.
\end{lemma}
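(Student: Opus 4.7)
The plan is essentially to read the claim off the definition of $h_I^i$. By definition, $h_I^i(\sigma)$ is a disjoint union of $q+1$ (possibly empty) simplices, where the last $q-i$ of them are $\emptyset$. Under the assumption $i\le q-1$, we have $q-i\ge 1$, so in particular the $(q+1)$-th (rightmost) slot is empty. Identifying $K^{\star q}$ with the subcomplex of $K^{\star(q+1)}=K^{\star q}\star K$ consisting of simplices whose last factor is $\emptyset$, this gives $h_I^i(\sigma)\in K^{\star q}$ for every $\sigma\in K$, which is exactly the statement.

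Before invoking this observation, one should verify that $h_I^i$ is in fact a simplicial map into $K^{\star(q+1)}$. For any simplex $\sigma\in K$, each of $\sigma\cap I_0,\ldots,\sigma\cap I_{i-1}$ and $\sigma\cap(I_i\sqcup\cdots\sqcup I_q)$ is a subset of $\sigma$, hence a simplex of $K$, so their disjoint union (padded with trailing empty simplices) is a simplex of $K^{\star(q+1)}$ by the definition of the join. Simpliciality is immediate: if $\tau\subset\sigma$, then $\tau\cap I_j\subset\sigma\cap I_j$ and $\tau\cap(I_i\sqcup\cdots\sqcup I_q)\subset\sigma\cap(I_i\sqcup\cdots\sqcup I_q)$, so $h_I^i(\tau)\subset h_I^i(\sigma)$. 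There is no real obstacle here — the lemma is a bookkeeping remark that identifies the number of trailing empty slots in $h_I^i(\sigma)$, included so that in the next section one can legitimately view $h_I^0,\ldots,h_I^{q-1}$ as filtering through the flag $K\subset K^{\star 2}\subset\cdots\subset K^{\star(q+1)}$ promised in the introduction.
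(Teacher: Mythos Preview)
Your proof is correct and matches the paper's approach: the paper states this lemma without proof because it is immediate from the definition of $h_I^i$, and your argument spells out exactly that observation (the last $q-i\ge 1$ join-factors of $h_I^i(\sigma)$ are $\emptyset$). The extra check that $h_I^i$ is simplicial is harmless but not needed here, since the paper already introduces $h_I^i$ as a simplicial map before the lemma.
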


By definition, the map $h_I^q$ is concerned with the $(q+1)$-fold products in $\mathcal{C}^*(K)$, and for $i=0,1,\ldots,q-1$, we may think of $h_I^i$ as a deformation of $h_I^q$ into the first $(i+1)$-th join $K^{\star(i+1)}\subset K^{\star(i+1)}\star K^{\star(q-i)}=K^{\star(q+1)}$. We define a higher homotopy among these deformations.

\begin{lemma}
  The map
  \[
    V(K)\times\{0,1,\ldots,q\}\to\underbrace{V(K)\sqcup\cdots\sqcup V(K)}_{q+1},\quad(v,i)\mapsto h_I^i(v).
  \]
  defines a simplicial map $H_I\colon K\otimes\Delta^q\to K^{\star(q+1)}$.
\end{lemma}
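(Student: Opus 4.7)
My plan is to verify the claim directly from the combinatorial descriptions of $K\otimes\Delta^q$ and $K^{\star(q+1)}$. A simplicial map is determined by a vertex map that sends simplices to simplices, so the task reduces to checking that the image, as a set, of every simplex of $K\otimes\Delta^q$ under the given vertex map is a simplex of $K^{\star(q+1)}$.

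First I would unpack the vertex map. Since $V(K)=I_0\sqcup\cdots\sqcup I_q$, each vertex $v\in V(K)$ lies in a unique $I_{j(v)}$. Reading off the formula for $h_I^i$ with $\sigma=\{v\}$, one sees that $h_I^i(v)$ is the copy of $v$ in the $c$-th factor of $V(K)\sqcup\cdots\sqcup V(K)$, where $c=\min(j(v),i)$: it lies in the $j(v)$-th factor when $j(v)<i$ (via the term $\sigma\cap I_{j(v)}$), and in the $i$-th factor when $j(v)\ge i$ (via the term $\sigma\cap(I_i\sqcup\cdots\sqcup I_q)$).

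Next I would take an arbitrary simplex $\tau=[(v_0,w_0)<(v_1,w_1)<\cdots<(v_p,w_p)]$ of $K\otimes\Delta^q$, so by definition of the product $\{v_0,\ldots,v_p\}$ is a simplex of $K$. I need to show that
\[
  H_I(\tau)=\{h_I^{w_0}(v_0),h_I^{w_1}(v_1),\ldots,h_I^{w_p}(v_p)\}
\]
is a simplex of $K^{\star(q+1)}$. By the definition of the join, a subset of $V(K)\sqcup\cdots\sqcup V(K)$ is a simplex of $K^{\star(q+1)}$ if and only if its intersection with each of the $q+1$ factors is a simplex of $K$. The intersection of $H_I(\tau)$ with the $c$-th factor is exactly $\{v_i\mid\min(j(v_i),w_i)=c\}$, which is a subset of $\{v_0,\ldots,v_p\}\in K$, and is therefore a simplex of $K$.

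I do not anticipate a substantive obstacle: the statement is essentially an unwinding of definitions combining the fact that each $h_I^i$ is simplicial (already recorded in the paper) with the factor-by-factor characterization of simplices in a join. The only mild subtlety is that $H_I$ need not be injective on a simplex, since different pairs $(v_i,w_i)\ne(v_{i'},w_{i'})$ with $v_i=v_{i'}$ and equal min-indices can map to the same vertex; this is harmless, as the simplicial map condition only requires the image to be a simplex when viewed as a set.
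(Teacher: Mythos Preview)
Your argument is correct and is essentially the same as the paper's: both take an arbitrary simplex of $K\otimes\Delta^q$, use that its projection $\{v_0,\ldots,v_p\}$ lies in $K$, and observe that the image in $K^{\star(q+1)}$ decomposes factor-by-factor into subsets of this simplex, hence into simplices of $K$. You are simply more explicit than the paper in identifying the factor via the formula $c=\min(j(v),i)$, whereas the paper just writes $H_I(\sigma)=\sigma_0\sqcup\cdots\sqcup\sigma_q$ with each $\sigma_i\subset\{v_0,\ldots,v_p\}$ and leaves the rest implicit.
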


\begin{proof}
  Let $\sigma=\{(v_0,w_0),\ldots,(v_p,w_p)\}$ be a simplex of $K\otimes\Delta^q$. Then $\{v_0,\ldots,v_p\}$ is a simplex of $K$, hence its subsets are simplices of $K$. By definition, we have
  \[
    H_I(\sigma)=\sigma_0\sqcup\cdots\sqcup\sigma_q.
  \]
  where each $\sigma_i$ is a subset of $\{v_0,\ldots,v_p\}$. Then each $\sigma_i$ is a simplex of $K$, and so $H_I(\sigma)$ is a simplex of $K^{\star(q+1)}$, completing the proof.
\end{proof}

We denote the higher prism operator $P_{H_I}$ by $P(I)$. As mentioned above, $P(I)$ is the identity map of $K$ for $q=0$. For $i=0,1,\ldots q-1$, let $I(i)$ be the partition $V(K)=J_0\sqcup\cdots\sqcup J_{q-1}$ such that
\[
  J_k=
  \begin{cases}
    I_k&0\le k\le i-1\\
    I_{i}\sqcup I_{i+1}&k=i\\
    I_{k+1}&i+1\le k\le q-1.
  \end{cases}
\]
For $i=0,1,\ldots,q-1$, we set
\[
  \mu_i=(j_{I_0\sqcup\cdots\sqcup I_{i}}\star j_{I_{i+1}\sqcup\cdots\sqcup I_{q}})\circ\iota_{I_0\sqcup\cdots\sqcup I_{i},I_{i+1}\sqcup\cdots\sqcup I_{q}}\colon K\to K\star K.
\]
By definition, the partition $I$ defines $\mu_i$, and we write $\mu_i^I$ when we specify the defining partition $I$. We specialize Lemma \ref{boundary condition} to the higher prism operators $P(I)$.

\begin{lemma}
  \label{boundary condition I}
  There is an identity
  \begin{multline*}
    \partial P(I)+(-1)^{q-1}P(I)\partial\\
    =\sum_{i=0}^{q-1}(-1)^i(1_{K^{\star i}}\star\mu_i\star 1_{K^{\star(q-i-1)}})_*\circ P(I(i))+(-1)^qP_{H_I\circ(1_K\otimes d^q)}.
  \end{multline*}
\end{lemma}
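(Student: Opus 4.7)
The plan is to apply Lemma \ref{boundary condition} with $H = H_I$ and then identify each of the $q+1$ terms $P_{H_I \circ (1_K \otimes d^i)}$ on the right-hand side. For $i = q$, nothing needs to be done: the composite $H_I \circ (1_K \otimes d^q)$ involves only $h_I^0, \ldots, h_I^{q-1}$, each of which lands in $K^{\star q} \subset K^{\star(q+1)}$ by Lemma \ref{h_I}, and this contributes the last summand $(-1)^q P_{H_I \circ (1_K \otimes d^q)}$ of the statement. The core step is therefore to prove, for $0 \le i \le q - 1$, the factorization of simplicial maps
\[
H_I \circ (1_K \otimes d^i) = (1_{K^{\star i}} \star \mu_i^I \star 1_{K^{\star(q-i-1)}}) \circ H_{I(i)} \colon K \otimes \Delta^{q-1} \to K^{\star(q+1)}.
\]
Once this is in hand, the naturality property $P_{f \circ H} = f_* \circ P_H$---immediate from the definition $P_H(\sigma) = \sum_s \sgn(s) H_*(\sigma \odot s)$---converts it to
\[
P_{H_I \circ (1_K \otimes d^i)} = (1_{K^{\star i}} \star \mu_i \star 1_{K^{\star(q-i-1)}})_* \circ P(I(i)),
\]
and substituting into Lemma \ref{boundary condition} yields the claimed identity.

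The factorization is a vertex-by-vertex check. A vertex $(v, k) \in V(K) \times \{0, \ldots, q - 1\}$ with $v \in I_j$ is sent by the left-hand side to $h_I^{d^i(k)}(v)$; since $d^i(k)$ equals $k$ for $k < i$ and $k + 1$ for $k \ge i$, the image lies at position $\min(j, d^i(k))$ in the $(q+1)$-fold join. On the right, $H_{I(i)}$ uses the coarsened partition whose middle block is $J_i = I_i \sqcup I_{i+1}$, so $v$ first lands in $K^{\star q}$ at position $j$, $i$, or $k$ according to where $j$ sits relative to this collapsed block; the map $\mu_i$ then splits the $i$-th slot of $K^{\star q}$ into positions $i$ and $i+1$ of $K^{\star(q+1)}$ depending on whether $v \in I_0 \sqcup \cdots \sqcup I_i$ or $v \in I_{i+1} \sqcup \cdots \sqcup I_q$, while the identity factors shift positions $> i$ by one. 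A case split on $j < i$, $j = i$, $j = i + 1$, and $j > i + 1$ shows that in every case the resulting position in $K^{\star(q+1)}$ is again $\min(j, d^i(k))$, matching the left-hand side.

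The main bookkeeping obstacle is keeping the three positional conventions synchronised: the $(q+1)$-fold join on the left, the $q$-fold join produced by $H_{I(i)}$, and the reassembly via the middle $\mu_i$ factor that splits one copy of $K$ into two---all on top of the index shift induced by $d^i$ on the $\Delta$-coordinate. In particular the delicate cases are $k = i$ with $j = i, i+1$, where the $\mu_i$ factor on the right is precisely what recovers the separation of $I_i$ from $I_{i+1}$ that is already visible in $h_I^{i+1}$ on the left. Once these indexings are aligned, the factorization is immediate, and the lemma follows by substitution into Lemma \ref{boundary condition}.
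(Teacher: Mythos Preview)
Your proposal is correct and follows essentially the same route as the paper: both arguments establish the factorization $H_I\circ(1_K\otimes d^i)=(1_{K^{\star i}}\star\mu_i\star 1_{K^{\star(q-i-1)}})\circ H_{I(i)}$ for $0\le i\le q-1$ and then feed it into Lemma~\ref{boundary condition}. The only cosmetic difference is that the paper verifies the factorization level-by-level, computing $(1_{K^{\star i}}\star\mu_i\star 1_{K^{\star(q-i-1)}})\circ h_{I(i)}^j$ for $j<i$, $j=i$, and $j>i$, whereas you do it vertex-by-vertex, tracking the join position via $\min(j,d^i(k))$; these are two equivalent bookkeeping schemes for the same identity.
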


\begin{proof}
  Let $\sigma$ be a simplex of $K$. For $0\le j<i<q$, we have
  \begin{align*}
    &(1_{K^{\star i}}\star\mu_i\star 1_{K^{\star(q-i-1)}})\circ h_{I(i)}^j(\sigma)\\
    &=(1_{K^{\star i}}\star\mu_i\star 1_{K^{\star(q-i-1)}})((\sigma\cap I_0)\sqcup\cdots\sqcup(\sigma\cap I_{j-1})\\
    &\quad\sqcup(\sigma\cap(I_{j}\sqcup\cdots\sqcup I_{q}))\sqcup\underbrace{\emptyset\sqcup\cdots\sqcup\emptyset}_{q-j-1})\\
    &=(\sigma\cap I_0)\sqcup\cdots\sqcup(\sigma\cap I_{j-1})\sqcup(\sigma\cap(I_{j}\sqcup\cdots\sqcup I_{q}))\sqcup\underbrace{\emptyset\sqcup\cdots\sqcup\emptyset}_{q-j}\\
    &=h_I^j(\sigma).
  \end{align*}
  For $0\le i=j<q$, we have
  \begin{align*}
    &(1_{K^{\star i}}\star\mu_i\star 1_{K^{\star(q-i-1)}})\circ h_{I(i)}^i(\sigma)\\
    &=(1_{K^{\star i}}\star\mu_i\star 1_{K^{\star(q-i-1)}})((\sigma\cap I_0)\sqcup\cdots\sqcup(\sigma\cap I_{i-1})\\
    &\quad\sqcup(\sigma\cap(I_{i}\sqcup\cdots\sqcup I_{q}))\sqcup\underbrace{\emptyset\sqcup\cdots\sqcup\emptyset}_{q-i-1})\\
    &=(\sigma\cap I_0)\sqcup\cdots\sqcup(\sigma\cap I_{i})\sqcup(\sigma\cap(I_{i+1}\sqcup\cdots\sqcup I_{q}))\sqcup\underbrace{\emptyset\sqcup\cdots\sqcup\emptyset}_{q-i-1}\\
    &=h_I^{i+1}(\sigma)=h_I^{j+1}(\sigma),
  \end{align*}
  and for $0\le i<j<q$, we have
  \begin{align*}
    &(1_{K^{\star i}}\star\mu_i\star 1_{K^{\star(q-i-1)}})\circ h_{I(i)}^j(\sigma)\\
    &=(1_{K^{\star i}}\star\mu_i\star 1_{K^{\star(q-i-1)}})((\sigma\cap I_0)\sqcup\cdots\sqcup(\sigma\cap I_{i-1})\sqcup(\sigma\cap(I_{i}\sqcup I_{i+1}))\\
    &\quad\sqcup(\sigma\cap I_{i+2})\sqcup\cdots\sqcup(\sigma\cap I_{j})\sqcup(\sigma\cap(I_{j+1}\sqcup\cdots\sqcup I_{q}))\sqcup\underbrace{\emptyset\sqcup\cdots\sqcup\emptyset}_{q-j-1})\\
    &=(\sigma\cap I_1)\sqcup\cdots\sqcup(\sigma\cap I_{j})\sqcup(\sigma\cap(I_{j+1}\sqcup\cdots\sqcup I_{q}))\sqcup\underbrace{\emptyset\sqcup\cdots\sqcup\emptyset}_{q-j-1}\\
    &=h_I^{j+1}(\sigma).
  \end{align*}
  Then the statement follows from Lemma \ref{boundary condition}.
\end{proof}

\begin{lemma}
  \label{prism}
  Let $a_i\in C^*(K)$ for $i=0,1,\ldots,q$. Then we have
  \begin{multline*}
    (P(I)^*\delta+(-1)^{q-1}\delta P(I)^*)(a_0\star\cdots\star a_q)\\
    =\sum_{i=0}^{q-1}(-1)^iP(I(i))^*(1_{K^{\star i}}\star\mu_i\star 1_{K^{\star(q-i-1)}})^*(a_0\star\cdots\star a_q).
  \end{multline*}
\end{lemma}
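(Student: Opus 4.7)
The plan is to obtain Lemma \ref{prism} by dualizing the chain-level identity of Lemma \ref{boundary condition I} and evaluating on the product cochain $a_0\star\cdots\star a_q$. Since $P(I)$ has degree $q$, applying $\mathrm{Hom}(-,\mathbb{F})$ to the identity of Lemma \ref{boundary condition I} yields
\[
P(I)^*\delta+(-1)^{q-1}\delta P(I)^* = \sum_{i=0}^{q-1}(-1)^i P(I(i))^*\circ(1_{K^{\star i}}\star\mu_i\star 1_{K^{\star(q-i-1)}})^* + (-1)^q P_{H_I\circ(1_K\otimes d^q)}^*.
\]
The desired identity then reduces to showing that the last term annihilates $a_0\star\cdots\star a_q$.

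To dispose of $P_{H_I\circ(1_K\otimes d^q)}^*(a_0\star\cdots\star a_q)$, I would unpack the image of $H_I\circ(1_K\otimes d^q)$. Since the coface $d^q\colon\Delta^{q-1}\to\Delta^q$ omits the vertex $q$, the composite is assembled entirely from the simplicial maps $h_I^0,\ldots,h_I^{q-1}$. By Lemma \ref{h_I}, each of these factors through $K^{\star q}\subset K^{\star q}\star K=K^{\star(q+1)}$, so every simplex appearing in $P_{H_I\circ(1_K\otimes d^q)}(\sigma)$ has the form $\tau_0\sqcup\cdots\sqcup\tau_{q-1}\sqcup\emptyset$, with an empty last join factor. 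Evaluating the product cochain on such a simplex yields
\[
(a_0\star\cdots\star a_q)(\tau_0\sqcup\cdots\sqcup\tau_{q-1}\sqcup\emptyset) = a_0(\tau_0)\cdots a_{q-1}(\tau_{q-1})\cdot a_q(\emptyset)=0,
\]
because each $a_i$ is a positive-degree simplicial cochain and hence vanishes on the $(-1)$-simplex $\emptyset$. Thus the extra term annihilates $a_0\star\cdots\star a_q$, and Lemma \ref{prism} follows.

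The main obstacle I expect is purely bookkeeping: one must be precise when dualizing about the order of composition and the degree-dependent signs, confirming that $\mathrm{Hom}(-,\mathbb{F})$ turns the displayed chain identity into the displayed cochain identity without introducing any extra signs beyond those attached to $P(I)^*$ and $\delta$. Once the dualization is correctly set up and the convention $a(\emptyset)=0$ for positive-degree cochains is recorded, the proof becomes an immediate application of Lemmas \ref{boundary condition I} and \ref{h_I}.
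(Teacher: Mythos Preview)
Your proposal is correct and follows the same route as the paper: dualize Lemma \ref{boundary condition I} and then kill the term $P_{H_I\circ(1_K\otimes d^q)}^*(a_0\star\cdots\star a_q)$ using Lemma \ref{h_I}. The paper phrases the vanishing as ``the inclusion $K^{\star q}\to K^{\star(q+1)}$ is trivial in cohomology'', but what is actually used---and what you spell out more explicitly---is the cochain-level fact that $(a_0\star\cdots\star a_q)(\tau_0\sqcup\cdots\sqcup\tau_{q-1}\sqcup\emptyset)=0$ because $a_q$ has degree $\ge 0$ (not necessarily positive, but that is enough: $\emptyset$ is the $(-1)$-simplex).
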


\begin{proof}
  By Lemma \ref{h_I} and the fact that the inclusion $K^{\star q}\to K^{\star q}\star K=K^{\star(q+1)}$ is trivial in cohomology, we have $(H_I\circ(1\otimes d^q))^*(a_0\star\cdots\star a_q)=0$, implying $P_{H_I\circ(1\otimes d^q)}^*(a_0\star\cdots\star a_q)=0$. Then by Lemma \ref{boundary condition I}, the proof is finished.
\end{proof}


\section{The dga $\mathcal{C}^*(K)$}\label{the complex}

Let $K$ be an (ordered) simplicial complex. In this section, we introduce the dga $\mathcal{C}^*(K)$ which is homotopy equivalent to the Koszul complex of the Stanley-Reisner ring $\mathbb{F}[K]$, and show its properties concerning (higher) Massey products.

We define
\[
  \mathcal{C}^p(K)=\bigoplus_{\emptyset\ne I\subset V(K)}\widetilde{C}^{p-|I|-1}(K_I)
\]
for $p>0$ and $\mathcal{C}^0(K)=\mathbb{F}$, where $\widetilde{C}^*(K)$ denotes the reduced cochain complex of $K$. The differential of $\mathcal{C}^*(K)$ is defined by the coboundary maps $\delta$ of $\widetilde{C}^*(K_I)$, and so we have
\[
  H^p(\mathcal{C}^*(K))=\bigoplus_{\emptyset\ne I\subset V(K)}\widetilde{H}^{p-|I|-1}(K_I)
\]
for $p>0$ and $H^0(\mathcal{C}^*(K))=\mathbb{F}$. The product of $\mathcal{C}^*(K)$ is defined by setting
\[
  \widetilde{C}^{p-|I|-1}(K_I)\otimes\widetilde{C}^{q-|J|-1}(K_J)\to\widetilde{C}^{p+q-|I\cup J|-1}(K_{I\cup J})
\]
to be trivial for $I\cap J\ne\emptyset$ and to be the composite
\[
  \widetilde{C}^{p-|I|-1}(K_I)\otimes\widetilde{C}^{q-|J|-1}(K_J)\xrightarrow{\star}\widetilde{C}^{q-|I\cup J|-1}(K_I\star K_J)\xrightarrow{\iota_{I,J}^*}\widetilde{C}^{p+q-|I\cup J|-1}(K_{I\cup J})
\]
for $I\cap J=\emptyset$.

Recall that the Stanley-Reisner ring of a simplicial complex $K$ with vertex set $\{1,2,\ldots,n\}$ is defined by
\[
  \mathbb{F}[K]=\mathbb{F}[v_1,\ldots,v_n]/(v_{i_1}\cdots v_{i_k}\mid\{i_1,\ldots,i_k\}\not\in K)
\]
where we set $|v_i|=2$. Then the Koszul complex of $\mathbb{F}[K]$ (with respect to $v_1,\ldots,v_n$) is a dga defined by
\[
  \mathcal{K}(K)=\mathbb{F}[K]\otimes\Lambda(x_1,\ldots,x_n),\quad dv_i=0,\quad dx_i=v_i
\]
where $|x_i|=1$. Let
\[
  \mathcal{R}(K)=\mathcal{K}(K)/(v_i^2,\,v_ix_i\mid i=1,2,\ldots,n).
\]
Then $\mathcal{R}(K)$ is a dga. Baskakov, Buchstaber and Panov \cite{BBP} studied the cellular cochain complex of a space called the moment-angle complex for $K$, which is actually $\mathcal{R}(K)$, and proved that the projection $\mathcal{K}(K)\to\mathcal{R}(K)$ is a homotopy equivalence. On the other hand, we can define a map
\[
  \phi\colon\mathcal{C}^*(K)\to\mathcal{R}(K)
\]
by $\phi(\sigma^*)=v_{i_1}\cdots v_{i_k}x_{j_1}\cdots x_{j_l}$ for $\sigma^*\in\widetilde{C}^*(K_I)\subset\mathcal{C}^*(K)$, where $\sigma^*$ is the Kronecker dual of a simplex $\sigma=\{i_1<\cdots<i_k\}\in K_I$ and $I-\sigma=\{j_1<\cdots<j_l\}$. It is easy to see that the map $\phi$ is an isomorphism of dga's, and so we get:

\begin{proposition}
  \label{C(K)-K(K)}
  There is a homotopy equivalence
  \[
    \mathcal{C}^*(K)\simeq\mathcal{K}(K).
  \]
\end{proposition}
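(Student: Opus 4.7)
The plan is to invoke the Baskakov--Buchstaber--Panov result cited in the paragraph preceding the proposition, which gives a homotopy equivalence $\mathcal{K}(K) \to \mathcal{R}(K)$, and then verify that the explicit map $\phi \colon \mathcal{C}^*(K) \to \mathcal{R}(K)$ defined just above the statement is an isomorphism of differential graded algebras. Composing $\phi$ with the BBP equivalence (or its homotopy inverse) will then produce the desired equivalence $\mathcal{C}^*(K) \simeq \mathcal{K}(K)$.

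To verify that $\phi$ is an isomorphism of graded vector spaces, I would first identify a natural $\mathbb{F}$-basis of $\mathcal{R}(K)$: since the relations $v_i^2 = 0$, $v_i x_i = 0$, and $x_i^2 = 0$ (the last from the exterior algebra) force each index to appear in at most one of the $v$- or $x$-factors, the monomials $v_{i_1}\cdots v_{i_k} x_{j_1}\cdots x_{j_l}$ with $\sigma = \{i_1 < \cdots < i_k\} \in K$ and $\{j_1 < \cdots < j_l\} \subset V(K) \setminus \sigma$ span $\mathcal{R}(K)$, and Stanley--Reisner kills exactly those with $\sigma \notin K$. Setting $I = \sigma \sqcup \{j_1,\ldots,j_l\}$ puts such monomials in bijection with the basis $\{\sigma^* \mid \sigma \in K_I,\ \emptyset \neq I \subset V(K)\}$ of $\mathcal{C}^*(K)$, and the degree matches since $|v_{i_1}\cdots v_{i_k} x_{j_1}\cdots x_{j_l}| = 2k + l$ equals $|\sigma| + |I|$.

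The compatibility with differentials is then a direct computation: $d(v_{i_1}\cdots v_{i_k} x_{j_1}\cdots x_{j_l})$ expands as a signed sum over substitutions $x_{j_s} \mapsto v_{j_s}$, and those terms with $\sigma \cup \{j_s\} \notin K$ are killed by the Stanley--Reisner ideal, leaving exactly the terms indexed by cofaces $\sigma \cup \{j_s\} \in K_I$, which is the coboundary $\delta \sigma^*$ in $\widetilde{C}^*(K_I)$. Compatibility with products is similarly transparent: the product of two monomials $\phi(\sigma^*)\phi(\tau^*)$ with $\sigma \in K_I,\ \tau \in K_J$ vanishes automatically if $I \cap J \neq \emptyset$ (some index would appear twice), matching the definition of the product on $\mathcal{C}^*(K)$ as trivial in that case, while for $I \cap J = \emptyset$ the product is $\pm \phi((\sigma \sqcup \tau)^*)$ if $\sigma \sqcup \tau \in K$ and zero otherwise, matching $\iota_{I,J}^*(\sigma^* \star \tau^*)$.

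The only real obstacle is sign bookkeeping, since the Koszul sign convention in $\mathcal{R}(K)$ (where $x_i$'s anticommute and $v_i$'s commute past $x_j$'s) must be reconciled with the sign conventions for the simplicial coboundary on $\widetilde{C}^*(K_I)$ and for the join product $\star$. These match up when the vertex order on $V(K)$ is used consistently throughout, but this is the one place one actually has to be careful; everything else is a direct unwinding of definitions, which is why the author merely says it is \emph{easy to see}.
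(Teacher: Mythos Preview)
Your proposal is correct and follows precisely the route the paper outlines in the paragraph immediately preceding the proposition: invoke the Baskakov--Buchstaber--Panov homotopy equivalence $\mathcal{K}(K)\to\mathcal{R}(K)$ and check that the explicit map $\phi$ is a dga isomorphism. The paper itself does not write out a proof beyond ``it is easy to see,'' so your sketch simply fills in the details (basis bijection, degree count, differential, product, signs) that the authors leave to the reader.
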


We recall the definition of (higher) Massey products. We refer to \cite{K,M} for details, where we follow the sign convention of May \cite{M}. Let $A$ be a dga. We set
\[
  \overline{a}=(-1)^{|a|+1}a
\]
for $a\in A$, where $|a|$ denotes the degree of $a$. Suppose we are given cohomology classes $\alpha_0,\ldots,\alpha_q\in H^*(A)$ for $q\ge 1$. We say that a set $\{a_{i,j}\in A\mid 0\le i\le j\le q,\,(i,j)\ne(0,q)\}$ is a defining system for $\alpha_0,\ldots,\alpha_q$ if $a_{i,i}$ represents $\alpha_i$ for $i=0,1,\ldots,q$ and the identity
\[
  da_{i,j}=\sum_{k=i}^{j-1}\overline{a}_{i,k}a_{k+1,j}
\]
holds for $i<j$. If there is a defining system for $\alpha_0,\ldots,\alpha_q$, then we say that the Massey product $\langle \alpha_0,\ldots,\alpha_q\rangle$ is defined, which is the set of all cohomology classes represented by a cocycle $\sum_{i=0}^{q-1}\overline{a}_{0,i}a_{i+1,q}$, where $\{a_{i,j}\}$ ranges over all defining systems for $\alpha_0,\ldots,\alpha_q$. We say that the Massey product of $\alpha_0,\ldots,\alpha_q$ is trivial if
\[
  0\in\langle \alpha_0,\ldots,\alpha_q\rangle.
\]
Observe that for $q=1$, the Massey product of $\alpha_0,\alpha_1$ is always defined, and $\langle\alpha_0,\alpha_1\rangle=\{\alpha_0\alpha_1\}$. Then the triviality of all Massey products implies the triviality of all products, and so we will not mention the triviality of products when we consider the triviality of all Massey products.

Observe that Massey products are preserved by a homotopy equivalence. Then by Proposition \ref{C(K)-K(K)}, we get:

\begin{proposition}
  \label{C(K)}
  A simplicial complex $K$ is $\mathbb{F}$-Golod if and only if all Massey products in $H^*(\C^*(K))$ are trivial.
\end{proposition}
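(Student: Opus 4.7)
The plan is to combine Proposition \ref{C(K)-K(K)} with Golod's theorem and the invariance of Massey products under dga homotopy equivalence. Recall that Golod's theorem, stated in the introduction, characterizes Golodness of $\mathbb{F}[K]$ as the triviality of every product and every higher Massey product in the Koszul cohomology $H^*(\mathcal{K}(K))$. As already noted just before the statement, the condition ``every product is trivial'' is subsumed by ``every Massey product is trivial'' (two-fold Massey products being precisely ordinary products, since $\langle\alpha_0,\alpha_1\rangle=\{\alpha_0\alpha_1\}$), so Golod's criterion reformulates as: $K$ is $\mathbb{F}$-Golod if and only if every Massey product in $H^*(\mathcal{K}(K))$ is trivial.

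It then remains to transport the vanishing condition along the homotopy equivalence $\mathcal{C}^*(K) \simeq \mathcal{K}(K)$ supplied by Proposition \ref{C(K)-K(K)}. This rests on the standard fact (see \cite{K,M}) that any dga quasi-isomorphism $f\colon A \to B$ induces a bijective correspondence between Massey products: if $\{a_{i,j}\}$ is a defining system for $\alpha_0,\ldots,\alpha_q$ in $A$, then $\{f(a_{i,j})\}$ is a defining system for $f_*\alpha_0,\ldots,f_*\alpha_q$ in $B$, and the representing cocycles correspond under $f$, so
\[
  f_*\langle \alpha_0,\ldots,\alpha_q\rangle\subset\langle f_*\alpha_0,\ldots,f_*\alpha_q\rangle.
\]
Since $f_*$ is an isomorphism on cohomology, the same holds for a homotopy inverse of $f$, yielding a two-sided bijection; in particular $0\in\langle\alpha_0,\ldots,\alpha_q\rangle$ if and only if $0\in\langle f_*\alpha_0,\ldots,f_*\alpha_q\rangle$. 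Applying this to the homotopy equivalence $\mathcal{C}^*(K)\simeq\mathcal{K}(K)$ and combining with the reformulated Golod criterion produces the equivalence in the proposition.

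There is no genuine obstacle here: the essential content has already been packaged into Proposition \ref{C(K)-K(K)} (which in turn invokes the Baskakov--Buchstaber--Panov homotopy equivalence $\mathcal{K}(K)\to\mathcal{R}(K)$ together with the dga isomorphism $\phi\colon\mathcal{C}^*(K)\to\mathcal{R}(K)$) and into Golod's classical theorem, and the proposition follows by bookkeeping together with the standard functoriality of Massey products under dga quasi-isomorphism.
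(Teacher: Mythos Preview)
Your argument is correct and follows exactly the route the paper takes: the paper's entire proof is the one-line remark ``Observe that Massey products are preserved by a homotopy equivalence'' together with an appeal to Proposition \ref{C(K)-K(K)}, and you have simply unpacked that remark (and the implicit use of Golod's criterion) in detail. There is nothing substantively different between your approach and the paper's.
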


Thus we will consider Massey products in $H^*(\C^*(K))$ for Golodness of a simplicial complex $K$, instead of $H^*(\mathcal{K}(K))$.

\begin{lemma}
  \label{non-disjoint}
  Let $\alpha_i\in\widetilde{H}^*(K_{I_i})\subset H^*(\mathcal{C}^*(K))$ for $i=0,1,\ldots,q$ with $q\ge 1$. If $I_i\cap I_j\ne\emptyset$ for some $i<j$ and the Massey product of $\alpha_0,\ldots,\alpha_q$ in $H^*(\mathcal{C}^*(K))$ is defined, then $\langle \alpha_0,\ldots,\alpha_q\rangle$ is trivial.
\end{lemma}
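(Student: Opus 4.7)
The approach is to exploit the multi-grading on $\mathcal{C}^*(K)$ by subsets of $V(K)$ coming from the decomposition $\mathcal{C}^p(K) = \bigoplus_J \widetilde{C}^{p-|J|-1}(K_J)$: the differential preserves this multi-grading, and the product sends $\widetilde{C}^*(K_{J_1}) \otimes \widetilde{C}^*(K_{J_2})$ into $\widetilde{C}^*(K_{J_1 \cup J_2})$ when $J_1 \cap J_2 = \emptyset$, and to zero otherwise. Each $\alpha_i$ is then homogeneous of multi-degree $I_i$, so one expects any sensible representative of $\langle \alpha_0,\ldots,\alpha_q\rangle$ to live in multi-degree $I_0 \cup \cdots \cup I_q$. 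Under the hypothesis that $I_i \cap I_j \neq \emptyset$ for some $i<j$, this union cannot be realized as a disjoint union, which is the structural reason the Massey product should collapse.

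My plan is to construct a multi-graded defining system $\{a_{r,s}\}$ for $\langle \alpha_0,\ldots,\alpha_q\rangle$ with $a_{r,s} \in \widetilde{C}^*(K_{I_r \cup \cdots \cup I_s})$ whenever $I_r,\ldots,I_s$ are pairwise disjoint, and $a_{r,s}=0$ otherwise. I proceed by induction on $s-r$: take $a_{r,r}$ to be the $\widetilde{C}^*(K_{I_r})$-component of a cocycle representing $\alpha_r$. For $s>r$, if $I_r,\ldots,I_s$ are not pairwise disjoint, I claim each product $\overline{a}_{r,k} a_{k+1,s}$ vanishes: picking $l<l'$ in $[r,s]$ with $I_l \cap I_{l'} \neq \emptyset$, either $l,l' \in [r,k]$ or $l,l' \in [k+1,s]$ (so one factor is zero by induction), or $l \leq k < l'$ (so the two canonical multi-degree supports $I_r \cup \cdots \cup I_k$ and $I_{k+1} \cup \cdots \cup I_s$ intersect and the product rule kills the term). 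Hence $a_{r,s}=0$ is consistent. If $I_r,\ldots,I_s$ are pairwise disjoint, then $\sum_k \overline{a}_{r,k} a_{k+1,s}$ is a cocycle in $\widetilde{C}^*(K_{I_r \cup \cdots \cup I_s})$, and I show it is a coboundary by comparing with $\delta a^{\mathrm{orig}}_{r,s}$ from the given defining system, projecting onto the canonical multi-degree, and checking that the ``cross-terms'' arising from off-canonical multi-degree components of the $a^{\mathrm{orig}}_{r',s'}$'s are themselves coboundaries (since off-canonical components of a cocycle representing $\alpha_r$ are cohomologically trivial, and the Leibniz rule propagates this property upward).

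Once the multi-graded defining system is in place, the Massey product representative $x = \sum_k \overline{a}_{0,k} a_{k+1,q}$ vanishes term-by-term by the same case analysis applied to the pair $i<j$ with $I_i \cap I_j \neq \emptyset$, so $[x]=0$ in $H^*(\mathcal{C}^*(K))$ and $0 \in \langle \alpha_0,\ldots,\alpha_q\rangle$. The main obstacle is the pairwise-disjoint case of the inductive step: precisely tracking how the off-canonical multi-degree components of the intermediate $a^{\mathrm{orig}}_{r,s}$'s propagate through the Leibniz rule and verifying that their cross-terms are coboundaries in the correct subcomplex. Morally, this is the statement that multi-graded and ungraded Massey products of homogeneous classes agree in the $\N^{|V(K)|}$-graded dga $\mathcal{C}^*(K)$, a general feature of multi-graded dgas whose proof requires a careful, nested induction.
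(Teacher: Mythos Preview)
Your approach is correct and mirrors the paper's, which also produces a modified defining system with vanishing Massey representative: starting from any defining system $\{a_{k,l}\}$, the paper simply sets $b_{k,l}=0$ when $k\le i<j\le l$ and $b_{k,l}=a_{k,l}$ otherwise, then verifies that $\{b_{k,l}\}$ is still a defining system and that $\sum_p\overline{b}_{0,p}b_{p+1,q}=0$ by the same three-case split you describe. The paper's verification of the middle case $i\le p<j$ tacitly assumes $a_{k,p}\in\widetilde{C}^*(K_{I_k\sqcup\cdots\sqcup I_p})$ --- precisely the multi-homogeneity you set out to construct --- so your argument makes explicit a step the paper takes for granted; your ``main obstacle'' (the pairwise-disjoint inductive step) is genuine but routine once phrased as the standard fact that altering a defining-system entry by a cocycle can always be compensated at higher levels, so no elaborate nested induction or general multi-graded Massey theory is needed.
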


\begin{proof}
  Take any defining system $\{a_{k,l}\}$ for $\alpha_0,\ldots,\alpha_q$. For $0\le k\le l\le q$ with $(k,l)\ne(0,q)$, we define
  \[
    b_{k,l}=
    \begin{cases}
      0& k\leq i<j\leq l,\\
      a_{k,l} &\text{otherwise}.
    \end{cases}
  \]
  Since $b_{k,k}=a_{k,k}$ for $k=0,1,\ldots,q$, each $b_{k,k}$ represents $\alpha_k$. If $(k,l)$ does not satisfy $k\le i<j\le l$, then we have
  \[
    \delta b_{k,l}=\delta a_{k,l}=\sum_{p=k}^{l-1}\overline{a}_{k,p}a_{p+1,l}=\sum_{p=k}^{l-1}\overline{b}_{k,p}b_{p+1,l}.
  \]
  Suppose $k\leq i<j\leq l$. If $p<i$, then $p+1\leq i<j\leq l$, implying $b_{p+1,l}=0$. If $p\ge j$, then $k\le i<j\le p$, implying $b_{k,p}=0$. Moreover, if $i\leq p<j$, then $\overline{b}_{k,p}b_{p+1,l}=0$ because $\overline{b}_{k,p}\in \widetilde{C}^*(K_{I_k\sqcup\cdots\sqcup I_p})$ and $b_{p+1,l}\in \widetilde{C}^*(K_{I_{p+1}\sqcup\cdots\sqcup I_l})$ such that $(I_k\sqcup\cdots\sqcup I_p)\cap(I_{p+1}\sqcup\cdots\sqcup I_l)\neq\emptyset$. Hence we get
  \[
    \delta b_{k,l}=0=\sum_{p=k}^{l-1}\overline{b}_{k,p}b_{p+1,l}
  \]
  and so $\{b_{k,j}\mid 0\le k\le l\le q,\,(k,l)\ne(0,q)\}$ is a defining system for $\alpha_0,\ldots,\alpha_q$ such that
  \[
    \sum_{p=0}^{q-1}\overline{b}_{0,p}b_{p+1,q}=0.
  \]
  Thus the Massey product of $\alpha_0,\ldots,\alpha_q$ is trivial, completing the proof.
\end{proof}

Let $\alpha_i\in\widetilde{H}^*(K_{I_i})$ for $i=0,1,\ldots,q$ with $q\ge 1$. If the Massey product of $\alpha_0,\ldots,\alpha_q$ in $H^*(\mathcal{C}^*(K))$ is defined, then we say that its support is $I_0\cup\cdots\cup I_q$.

\begin{lemma}
  \label{full subcomplex}
  Let $\mathcal{C}$ be a class of simplicial complexes which is closed under taking full subcomplexes. If for each $K\in\mathcal{C}$, all Massey products in $H^*(\mathcal{C}^*(K))$ with support $V(K)$ are trivial, then all simplicial complexes in $\mathcal{C}$ are $\mathbb{F}$-Golod.
\end{lemma}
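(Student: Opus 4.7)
The plan is to invoke Proposition \ref{C(K)} and reduce to showing that for every $K\in\mathcal{C}$, each multi-homogeneous Massey product $\langle\alpha_0,\ldots,\alpha_q\rangle$ in $H^*(\mathcal{C}^*(K))$ with $\alpha_i\in\widetilde{H}^*(K_{I_i})$ contains $0$. Restricting to multi-homogeneous classes is natural here because both the differential and the product on $\mathcal{C}^*(K)$ respect its decomposition into the summands $\widetilde{C}^{*-|I|-1}(K_I)$, so the Massey products relevant to Golodness are compatible with this $V(K)$-grading.

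I would then split into two cases. If $I_i\cap I_j\ne\emptyset$ for some $i<j$, Lemma \ref{non-disjoint} immediately yields triviality. Otherwise $I_0,\ldots,I_q$ are pairwise disjoint; set $S=I_0\sqcup\cdots\sqcup I_q$ and $L=K_S$. Since $\mathcal{C}$ is closed under full subcomplexes, $L\in\mathcal{C}$, and because $L_I=K_I$ for every $\emptyset\ne I\subseteq S$, the summand inclusion
\[
  \mathcal{C}^*(L)=\bigoplus_{\emptyset\ne I\subseteq S}\widetilde{C}^{*-|I|-1}(K_I)\hookrightarrow\mathcal{C}^*(K)
\]
is a morphism of dgas (the product on $\mathcal{C}^*(K)$ preserves $\mathcal{C}^*(L)$ because $I,J\subseteq S$ implies $I\cup J\subseteq S$). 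Each $\alpha_i$ can then be viewed as a class in $\widetilde{H}^*(L_{I_i})\subseteq H^*(\mathcal{C}^*(L))$, so the Massey product $\langle\alpha_0,\ldots,\alpha_q\rangle$ computed in $H^*(\mathcal{C}^*(L))$ has support $V(L)=S$. By hypothesis it contains $0$, and since dga homomorphisms preserve Massey products, the same holds in $H^*(\mathcal{C}^*(K))$.

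The subtle point, which I expect to be the main obstacle, is verifying that the Massey product computed inside $\mathcal{C}^*(L)\subset\mathcal{C}^*(K)$ really coincides with the one in the ambient dga---equivalently, that one can choose a defining system for the $\alpha_i$ lying entirely in $\mathcal{C}^*(L)$. For multi-homogeneous inputs with pairwise disjoint supports this should be essentially formal: by induction on $l-k$ one picks $a_{k,l}\in\widetilde{C}^*(K_{I_k\sqcup\cdots\sqcup I_l})$, using that the differential of $\mathcal{C}^*(K)$ preserves each summand and that the product $\iota_{I_k\sqcup\cdots\sqcup I_p,\,I_{p+1}\sqcup\cdots\sqcup I_l}^*\circ\star$ lands in that same summand. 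This is the place where the multi-grading of $\mathcal{C}^*(K)$ does the real work.
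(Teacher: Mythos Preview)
Your approach is correct and coincides in substance with the paper's proof. Both arguments pass from $K$ to the full subcomplex $K_S$ (your $L$), use that $K_S\in\mathcal{C}$ so that the Massey product with support $S=V(K_S)$ is trivial there, and then transport the trivial defining system back to $\mathcal{C}^*(K)$.

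The paper packages your ``subtle point'' more economically: rather than building a defining system in $\mathcal{C}^*(L)$ by hand, it simply observes that $\mathcal{C}^*(K_I)$ is a \emph{retract} of $\mathcal{C}^*(K)$ as a dga (the projection $j_I^*\colon\mathcal{C}^*(K)\to\mathcal{C}^*(K_I)$ is split by the summand inclusion). This single remark simultaneously handles definedness in the smaller dga and the transfer back. Your inductive construction of $a_{k,l}\in\widetilde{C}^*(K_{I_k\sqcup\cdots\sqcup I_l})$ is exactly what one gets by applying $j_I^*$ to an arbitrary defining system and then projecting to the relevant summand, so nothing is lost---just more is written out. Also, your case split using Lemma~\ref{non-disjoint} is unnecessary: the paper's retract argument works uniformly for any support $I=I_0\cup\cdots\cup I_q$, disjoint or not.
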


\begin{proof}
  By definition, $\mathcal{C}^*(K)$ is natural with respect to subcomplexes of $K$, that is, an inclusion $i\colon L\to K$ of a subcomplex induces a dga homomorphism $i^*\colon \mathcal{C}^*(K)\to\mathcal{C}^*(L)$. Then for any given Massey product $\langle\alpha_0,\ldots,\alpha_q\rangle$ in $H^*(\mathcal{C}^*(K))$ with support $\emptyset\ne I\subset V(K)$, we have
  \[
    j_I^*(\langle\alpha_0,\ldots,\alpha_q\rangle)\subset\langle j_I^*(\alpha_0),\ldots,j_I^*(\alpha_q)\rangle\subset H^*(\mathcal{C}^*(K_I)).
  \]
  Suppose that $K$ belongs to a class $\mathcal{C}$ in the statement. Since the Massey product $\langle j_I^*(\alpha_0),\ldots,j_I^*(\alpha_q)\rangle$ in $H^*(\mathcal{C}^*(K_I))$ has support $I=V(K_I)$, it is trivial by assumption. Observe that $\mathcal{C}^*(K_I)$ is a retract of $\mathcal{C}^*(K)$. Then we obtain that the Massey product $\langle\alpha_0,\ldots,\alpha_q\rangle$ itself is trivial, completing the proof.
\end{proof}

\begin{lemma}
  \label{triviality}
  Let $\alpha_i\in\widetilde{H}^*(K_{I_i})$ for $i=0,1,\ldots,q$ with $q\ge 1$ such that $V(K)=I_0\sqcup\cdots\sqcup I_q$. Suppose that there are $a_{i,j}\in\widetilde{C}^*(K)$ for $0\le i\le j\le q$\ such that $j_{I_i}^*(a_{i,i})$ represents $\alpha_i$ for $i=0,1,\ldots,q$ and
  \begin{equation}
    \label{condiotion}
    \delta a_{i,j}=\sum_{k=i}^{j-1}\mu_{k}^*(\overline{a}_{i,k}\star a_{k+1,j})
  \end{equation}
  for $0\le i<j\le q$. Then the Massey product of $\alpha_0,\ldots,\alpha_q$ is defined and trivial.
\end{lemma}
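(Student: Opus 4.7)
The plan is to construct a defining system for $\langle \alpha_0, \ldots, \alpha_q \rangle$ in $\mathcal{C}^*(K)$ directly from the given data, by restricting each $a_{i,j}$ to the appropriate full subcomplex. Set
\[
  b_{i,j} = j_{I_i \sqcup \cdots \sqcup I_j}^*(a_{i,j}) \in \widetilde{C}^*(K_{I_i \sqcup \cdots \sqcup I_j}) \subset \mathcal{C}^*(K)
\]
for all $0 \le i \le j \le q$. Since $V(K) = I_0 \sqcup \cdots \sqcup I_q$, the map $j_{V(K)}$ is the identity, so $b_{0,q} = a_{0,q}$ is meaningful as well. The goal is to show that $\{b_{i,j}\}_{(i,j) \ne (0,q)}$ is a defining system for $\alpha_0, \ldots, \alpha_q$ and that the resulting Massey product representative equals $\delta b_{0,q}$, hence is a coboundary.

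The key step is the simplicial identity
\[
  \mu_k \circ j_{I_i \sqcup \cdots \sqcup I_j} = (j_{I_i \sqcup \cdots \sqcup I_k} \star j_{I_{k+1} \sqcup \cdots \sqcup I_j}) \circ \iota_{I_i \sqcup \cdots \sqcup I_k,\, I_{k+1} \sqcup \cdots \sqcup I_j} \colon K_{I_i \sqcup \cdots \sqcup I_j} \to K \star K
\]
for $i \le k < j$, which holds essentially by inspection: both sides send $\sigma \subset I_i \sqcup \cdots \sqcup I_j$ to $(\sigma \cap (I_i \sqcup \cdots \sqcup I_k)) \sqcup (\sigma \cap (I_{k+1} \sqcup \cdots \sqcup I_j))$ and then include into $K \star K$. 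Applying $j_{I_i \sqcup \cdots \sqcup I_j}^*$ to the hypothesis $\delta a_{i,j} = \sum_{k=i}^{j-1} \mu_k^*(\overline{a}_{i,k} \star a_{k+1,j})$ and invoking the definition of the product on $\mathcal{C}^*(K)$ as $\iota^*$ composed with $\star$ on cochains with disjoint index sets, one obtains
\[
  \delta b_{i,j} = \sum_{k=i}^{j-1} \overline{b}_{i,k}\, b_{k+1,j}
\]
in $\mathcal{C}^*(K)$ for every $0 \le i < j \le q$, including the endpoint $(i,j) = (0,q)$.

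Since each $b_{i,i} = j_{I_i}^*(a_{i,i})$ is a cocycle representing $\alpha_i$ by hypothesis, the collection $\{b_{i,j}\}_{(i,j) \ne (0,q)}$ is a bona fide defining system for $\langle \alpha_0, \ldots, \alpha_q \rangle$ in $\mathcal{C}^*(K)$. The associated Massey product representative $\sum_{k=0}^{q-1} \overline{b}_{0,k}\, b_{k+1,q}$ equals $\delta b_{0,q}$ by the displayed formula applied at $(i,j) = (0,q)$, so it is a coboundary; hence $0 \in \langle \alpha_0, \ldots, \alpha_q \rangle$. There is little conceptual obstacle: once the simplicial identity above is recognized, the lemma is essentially the tautological translation of the given data, via the restriction maps $j_{I_i \sqcup \cdots \sqcup I_j}^*$, from the ambient complex $\widetilde{C}^*(K)$ into the dga $\mathcal{C}^*(K)$. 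The only delicate point is checking that the sign convention $\overline{\,\cdot\,}$ transports correctly under the degree shift built into $\mathcal{C}^*(K)$, but this is routine and is arranged by the very definition of $\mathcal{C}^*(K)$.
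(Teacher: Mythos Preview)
Your proposal is correct and follows essentially the same approach as the paper's own proof: both define $b_{i,j}=j_{I_i\sqcup\cdots\sqcup I_j}^*(a_{i,j})$, use the simplicial identity relating $\mu_k\circ j_{I_i\sqcup\cdots\sqcup I_j}$ to $(j_{I_i\sqcup\cdots\sqcup I_k}\star j_{I_{k+1}\sqcup\cdots\sqcup I_j})\circ\iota$, deduce $\delta b_{i,j}=\sum_{k}\overline{b}_{i,k}b_{k+1,j}$ in $\mathcal{C}^*(K)$, and read off both the defining system and the triviality from the $(i,j)=(0,q)$ case.
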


\begin{proof}
  For $0\le i\le j\le q$, let $b_{i,j}=j^*_{I_i\sqcup\cdots\sqcup I_j}(a_{i,j})$. Then for $0\le i\le k<j\le q$,
  \begin{align*}
    &j_{I_i\sqcup\cdots\sqcup I_j}^*\circ\mu_k^*(\overline{a}_{i,k}\star a_{k+1,j})\\
    &=((j_{I_0\sqcup\cdots\sqcup I_{k}}\star j_{I_{k+1}\sqcup\cdots\sqcup I_q})\circ\iota_{I_0\sqcup\cdots\sqcup I_{k},I_{k+1}\sqcup\cdots\sqcup I_q}\circ j_{I_i\sqcup\cdots\sqcup I_j})^*(\overline{a}_{i,k}\star a_{k+1,j})\\
    &=((j_{I_i\sqcup\cdots\sqcup I_k}\star j_{I_{k+1}\sqcup\cdots\sqcup I_j})\circ\iota_{I_i\sqcup\cdots\sqcup I_{k},I_{k+1}\sqcup\cdots\sqcup I_j})^*(\overline{a}_{i,k}\star a_{k+1,j})\\
    &=(\iota_{I_i\sqcup\cdots\sqcup I_{k},I_{k+1}\sqcup\cdots\sqcup I_j})^*(\overline{b}_{i,k}\star b_{k+1,j})\\
    &=\overline{b}_{i,k}b_{k+1,j}
  \end{align*}
  where the product in the last line is taken in $\mathcal{C}^*(K)$. Then for $0\le i<j\le q$, we get an identity in $\mathcal{C}^*(K)$
  \begin{equation}
    \label{b}
    \delta b_{i,j}=\sum_{k=i}^{j-1}\overline{b}_{i,k}b_{k+1,j}.
  \end{equation}
  By assumption, $b_{i,i}$ represents $\alpha_i$ for each $i$, and so by \eqref{b}, $\{b_{i,j}\mid 0\le i\le j\le q,\,(i,j)\ne(0,q)\}$ is a defining system for $\alpha_0,\ldots,\alpha_q$ in $H^*(\mathcal{C}^*(K))$. Then the Massey product $\langle\alpha_0,\ldots,\alpha_q\rangle$ in $H^*(\mathcal{C}^*(K))$ is defined, so the RHS of \eqref{b} for $(i,j)=(0,q)$ represents an element of $\langle\alpha_0,\ldots,\alpha_q\rangle$. Thus \eqref{b} for $(i,j)=(0,q)$ shows that $\langle\alpha_0,\ldots,\alpha_q\rangle$ is trivial, completing the proof.
\end{proof}


\section{Proof of Theorem \ref{main}}\label{proof of theorem}

In this section, we prove Theorem \ref{main}. Let $K$ be an (ordered) simplicial complex. To prove Theorem \ref{main}, we need to show that all Massey products in $H^*(\mathcal{C}^*(K))$ are trivial whenever $K$ is $\mathbb{F}$-tight. We consider the special case that the support of Massey products are $V(K)$. Take any $\alpha_i\in\widetilde{H}^*(K_{I_i})\subset H^*(\mathcal{C}^*(K))$ with $\emptyset\ne I_i\subset V(K)$ for $i=0,1,\ldots,q$ such that $V(K)=I_0\cup\cdots\cup I_q$. We shall consider the Massey product of $\alpha_0,\ldots,\alpha_q$. By Lemma \ref{non-disjoint}, if $I_0,\ldots,I_q$ are not pairwise disjoint, then the Massey product of $\alpha_0,\ldots,\alpha_q$ is trivial whenever it is defined. So we may suppose that $I_0,\ldots,I_q$ are pairwise disjoint, and by Lemma \ref{triviality}, it is sufficient to construct $a_{ij}\in\widetilde{C}^*(K)$ for $0\le i\le j\le q$ satisfying the condition of Lemma \ref{triviality}.

\begin{lemma}
  \label{i-j=0}
  If $K$ is $\mathbb{F}$-tight, then for each $i=0,1,\ldots,q$, there is a cocycle $a_{i,i}\in\widetilde{C}^*(K)$ such that $j_{I_i}^*(a_{i,i})$ represents $\alpha_i$.
\end{lemma}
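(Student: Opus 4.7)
The plan is to deduce the statement directly from $\mathbb{F}$-tightness by dualising to cohomology. First I would record that, because we are working over a field, the injectivity of $H_*(K_{I_i};\mathbb{F}) \to H_*(K;\mathbb{F})$ provided by tightness is equivalent, via the Universal Coefficient Theorem, to surjectivity of the restriction map $j_{I_i}^* \colon H^*(K;\mathbb{F}) \to H^*(K_{I_i};\mathbb{F})$. Since $I_i \ne \emptyset$, the same surjectivity passes to reduced cohomology, so the induced map $j_{I_i}^* \colon \widetilde{H}^*(K;\mathbb{F}) \to \widetilde{H}^*(K_{I_i};\mathbb{F})$ is surjective.

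With that in hand, the construction of $a_{i,i}$ is immediate: lift $\alpha_i$ to a class $\beta_i \in \widetilde{H}^*(K;\mathbb{F})$ satisfying $j_{I_i}^*(\beta_i) = \alpha_i$, and choose any cocycle $a_{i,i} \in \widetilde{C}^*(K)$ representing $\beta_i$. Then $j_{I_i}^*(a_{i,i})$ is by definition a cocycle representing $\alpha_i$, which is exactly what is required.

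I do not anticipate any real obstacle in this lemma itself: it is essentially a rephrasing of $\mathbb{F}$-tightness at the cochain level, together with the freedom to pick any cocycle representative after dualisation. The genuine difficulty in the overall proof of Theorem \ref{main} will begin only at the next stage, where one must extend the diagonal terms $a_{i,i}$ produced here to a full family $\{a_{i,j}\}_{0 \le i \le j \le q}$ satisfying the coboundary condition \eqref{condiotion}. That step is where the higher prism operator $P(I)$ and the identity of Lemma \ref{prism} will have to be combined with repeated applications of tightness to kill successive obstructions inductively.
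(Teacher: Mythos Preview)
Your argument is correct and matches the paper's own proof, which simply observes that $\mathbb{F}$-tightness makes $j_{I_i}^*\colon H^*(K)\to H^*(K_{I_i})$ surjective and then takes any cocycle representative of a lift of $\alpha_i$. The only difference is that you spell out the dualisation via the Universal Coefficient Theorem, which the paper leaves implicit.
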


\begin{proof}
  Since $K$ is $\mathbb{F}$-tight, the map $j_{I_i}^*\colon H^*(K)\to H^*(K_{I_i})$ is surjective for each $i$. Then the statement follows.
\end{proof}

We prove the existence of $a_{i,j}$ for $0\le i<j\le q$ by induction on $j-i$.

\begin{lemma}
  \label{j-i=1}
  If there are cocycles $a_{i,i}\in\widetilde{C}^*(K)$ such that each $j_{I_i}^*(a_{i,i})$ represents $\alpha_i$ for $i=0,1,\ldots,q$, then for $i=0,1,\ldots,q-1$, there are $a_{i,i+1}\in\widetilde{C}^*(K)$ satisfying
  \[
    \delta a_{i,i+1}=\mu_i^*(\overline{a}_{i,i}\star a_{i+1,i+1}).
  \]
\end{lemma}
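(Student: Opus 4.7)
The plan is to construct $a_{i,i+1}$ as the value of a higher prism operator applied to the cocycle $\overline{a}_{i,i}\star a_{i+1,i+1}$, by specializing Lemma \ref{prism} to the case $q=1$. The key observation is that for the two-block partition $\mathcal{I}=(I_0\sqcup\cdots\sqcup I_i,\,I_{i+1}\sqcup\cdots\sqcup I_q)$ of $V(K)$, the map $\mu_0^{\mathcal{I}}$ appearing in the $q=1$ instance of Lemma \ref{prism} coincides by definition with the map $\mu_i=\mu_i^I$ associated to the original $(q+1)$-block partition $I$, while the refinement $\mathcal{I}(0)$ is the trivial one-block partition $(V(K))$, for which $P(\mathcal{I}(0))$ is the identity on $C_*(K)$.

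With this setup the argument is short. First I would note that $\overline{a}_{i,i}\star a_{i+1,i+1}\in\widetilde{C}^*(K\star K)$ is a cocycle, since $a_{i,i}$ and $a_{i+1,i+1}$ are cocycles and $\star$ satisfies the Leibniz rule. Applying Lemma \ref{prism} with $a_0=\overline{a}_{i,i}$ and $a_1=a_{i+1,i+1}$, the right-hand sum reduces to a single term and the formula collapses to
\[
  \delta\,P(\mathcal{I})^*(\overline{a}_{i,i}\star a_{i+1,i+1})\;=\;\mu_i^*(\overline{a}_{i,i}\star a_{i+1,i+1}),
\]
because $P(\mathcal{I})^*\delta(\overline{a}_{i,i}\star a_{i+1,i+1})=0$ by the cocycle property. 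Hence $a_{i,i+1}:=P(\mathcal{I})^*(\overline{a}_{i,i}\star a_{i+1,i+1})\in\widetilde{C}^*(K)$ is the required cochain.

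Conceptually there is no substantive obstacle in this base case; $P(\mathcal{I})^*$ is simply a chain-level witness to the topological fact that $\mu_i\colon K\to K\star K$ is null-homotopic (it factors through a cone $K\star\{v\}\subset K\star K$, as explained in the introduction). It is worth emphasizing that $\mathbb{F}$-tightness of $K$ is not directly invoked here; tightness has already been consumed in Lemma \ref{i-j=0} to produce the cocycles $a_{i,i}$ in the first place. I expect the genuine difficulty to appear in the inductive step $j-i>1$, where one must show that $\sum_{k=i}^{j-1}\mu_k^*(\overline{a}_{i,k}\star a_{k+1,j})$ is a coboundary in $\widetilde{C}^*(K)$; this is where the full higher prism operator $P(I)^*$ together with $\mathbb{F}$-tightness of $K$ will be needed.
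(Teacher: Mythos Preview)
Your proposal is correct and follows essentially the same argument as the paper: both define the two-block partition with parts $I_0\sqcup\cdots\sqcup I_i$ and $I_{i+1}\sqcup\cdots\sqcup I_q$, apply the $q=1$ case of Lemma \ref{prism}, use that $P$ of the trivial refinement is the identity and that $\mu_0$ for this partition equals $\mu_i^I$, kill the $P^*\delta$ term via the cocycle hypothesis, and set $a_{i,i+1}=P(\mathcal{I})^*(\overline{a}_{i,i}\star a_{i+1,i+1})$. Your side remarks that tightness is not used here and that the substantive work lies in the inductive step $j-i>1$ are also accurate.
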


\begin{proof}
  Let $J$ denote the partition $V(K)=J_0\sqcup J_1$ such that $J_0=I_0\sqcup\cdots\sqcup I_i$ and $J_1=I_{i+1}\sqcup\cdots\sqcup I_q$. By Lemma \ref{prism}, we have
  \begin{align*}
    \mu_i^*(\overline{a}_{i,i}\star a_{i+1,i+1})&=P(J(0))^*(\mu_i^*(\overline{a}_{i,i}\star a_{i+1,i+1}))\\
    &=(P(J)^*\delta+\delta P(J)^*)(\overline{a}_{i,i}\star a_{i+1,i+1})\\
    &=\delta P(J)^*(\overline{a}_{i,i}\star a_{i+1,i+1})
  \end{align*}
  where $P(J(0))$ is the identity map of $K$ and $P(J)^*\delta(\overline{a}_{i,i}\star a_{i+1,i+1})=0$ as $a_{i,i}$ and $a_{i+1,i+1}$ are cocycles. Then $a_{i,i+1}=P(J)^*(\overline{a}_{i,i}\star a_{i+1,i+1})$ is the desired element.
\end{proof}

Now we assume that for $j-i<n$, we have the desired $a_{i,j}$, and prove the $j-i=n$ case. Let $s=(s_0,\ldots,s_{k+1})\in\mathcal{S}(j-i,k)$. We define the partition $I^s$ by $V(K)=J_0\sqcup\cdots\sqcup J_k$ such that
\[
  J_p=
  \begin{cases}
    I_{i_0}\sqcup\cdots\sqcup I_{i_1}&p=0\\
    I_{i_p+1}\sqcup\cdots\sqcup I_{i_{p+1}}&p=1,\ldots,k
  \end{cases}
\]
where $i_p=i+s_p$ for $p=0,1,\ldots,k+1$. We set
\[
  \epsilon(s)=
  \begin{cases}
    (|a_{i_1+1,i_2}|+1)+(|a_{i_3+1,i_4}|+1)+\cdots+(|a_{i_{k-1}+1,i_k}|+1)&(k\text{ is even})\\
    (|a_{i_0,i_1}|+1)+(|a_{i_2+1,i_3}|+1)+\cdots+(|a_{i_{k-1}+1,i_k}|+1)&(k\text{ is odd})
  \end{cases}
\]
for $k\ge 1$, and
\[
  a^s=a_{i_0,i_1}\star a_{i_1+1,i_2}\star\cdots\star a_{i_k+1,i_{k+1}}.
\]
We also set
\[
  \epsilon(k)=
  \begin{cases}
    0&(k\equiv 1,2\mod 4)\\
    1&(k\equiv 0,3\mod 4).
  \end{cases}
\]
Let $\widehat{\mathcal{S}}(p,q)$ denote the subset of $\mathcal{S}(p,q)$ consisting of $(p,q)$-shuffles $(s_0,\ldots,s_{q+1})$ satisfying $0=s_0\le s_1<s_2<\cdots<s_{q+1}=p$. By definition, $\widehat{\mathcal{S}}(p,q)=\emptyset$ for $q>p$.

\begin{lemma}
  \label{decomposition}
  If $a_{i,j}$ exists for $j-i<n$, then for $j-i=n$ and $k=1,2,\ldots,j-i$,
  \begin{equation}
    \label{partition}
    \sum_{p=i}^{j-1}\mu_p^*(\overline{a}_{i,p}\star a_{p+1,j})\equiv\sum_{s\in\widehat{\mathcal{S}}(j-i,k)}(-1)^{\epsilon(s)+\epsilon(k)}P(I^s)^*\delta(a^s)\mod\mathrm{Im}\,\delta.
  \end{equation}
\end{lemma}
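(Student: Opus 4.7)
The plan is to proceed by induction on $k$. Since the LHS of \eqref{partition} is independent of $k$, it suffices to verify the identity for $k=1$ and then to establish $\mathrm{RHS}_k\equiv\mathrm{RHS}_{k+1}\pmod{\mathrm{Im}\,\delta}$ for $1\le k<j-i$, where $\mathrm{RHS}_k$ denotes the right-hand side of \eqref{partition}. Throughout, the outer induction on $n=j-i$ guarantees that every $a_{i',j'}$ with $j'-i'<n$ satisfies \eqref{condiotion}, which will be used freely.

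For the base case $k=1$, each $s=(0,s_1,n)\in\widehat{\mathcal{S}}(n,1)$ with $s_1\in\{0,1,\ldots,n-1\}$ yields a $2$-partition $I^s$ whose associated map $\mu_0^{I^s}$ is $\mu_{i+s_1}$. Applying Lemma \ref{prism} with $q=1$, and using that $P(I^s(0))$ is the identity on $C^*(K)$, gives
\[
  P(I^s)^*\delta(a^s)\equiv\mu_{i+s_1}^*(a_{i,i+s_1}\star a_{i+s_1+1,j})\pmod{\mathrm{Im}\,\delta}.
\]
Since $\epsilon(1)=0$ and $\epsilon(s)=|a_{i,i+s_1}|+1$ when $k=1$, multiplying by $(-1)^{\epsilon(s)+\epsilon(1)}$ absorbs the overline on $\overline{a}_{i,i+s_1}$, and summing over $s_1$ recovers the LHS $\sum_{p=i}^{j-1}\mu_p^*(\overline{a}_{i,p}\star a_{p+1,j})$.

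For the inductive step, apply Lemma \ref{prism} to each $s'\in\widehat{\mathcal{S}}(n,k+1)$ with $q=k+1$ to obtain, modulo $\mathrm{Im}\,\delta$,
\[
  P(I^{s'})^*\delta(a^{s'})\equiv\sum_{l=0}^{k}(-1)^l P(I^{s'}(l))^*(1_{K^{\star l}}\star\mu_l^{I^{s'}}\star 1_{K^{\star(k-l)}})^*(a^{s'}).
\]
A direct check shows that $I^{s'}(l)=I^s$, where $s\in\widehat{\mathcal{S}}(n,k)$ is obtained from $s'$ by deleting the entry $s'_{l+1}$; and the pullback $(1\star\mu_l^{I^{s'}}\star 1)^*(a^{s'})$ equals the $(k+1)$-fold product obtained from $a^s$ by replacing its $r$-th factor (with $r=l$) by $\mu_{i+s'_{l+1}}^*(a_l^{s'}\star a_{l+1}^{s'})$, which is precisely one summand (indexed by the internal split point $p=i+s'_{l+1}$) of the expansion of $\delta(a^s)$ via the derivation property and \eqref{condiotion}. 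The pairs $(s',l)$ biject with the triples $(s,r,p)$ parameterising this derivation expansion, so reorganising the double sum yields $\mathrm{RHS}_{k+1}\equiv\mathrm{RHS}_k\pmod{\mathrm{Im}\,\delta}$, provided the signs match.

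The hard part will be sign bookkeeping. On one side, each term carries the factor $(-1)^{\epsilon(s')+\epsilon(k+1)+l}$; on the other side, differentiating the $r$-th factor of the $(k+1)$-fold $\star$-product $a^s$ contributes a Koszul sign depending on cumulative degrees, the substitution via \eqref{condiotion} hides an extra $(-1)^{|a_{?,p}|+1}$ inside $\overline{a}_{?,p}$, and the overall prefactor is $(-1)^{\epsilon(s)+\epsilon(k)}$. The parity-dependent definition of $\epsilon(s)$ (which alternates the factors whose degrees are summed depending on $k\bmod 2$), together with the mod-$4$ pattern $1,0,0,1,1,0,0,1,\ldots$ of $\epsilon(k)$, is engineered precisely to keep this alternation consistent under insertion of a new split point. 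I expect the bulk of the argument to be devoted to a case analysis on the parities of $k$, $l$, and $r$ to verify the required sign identity.
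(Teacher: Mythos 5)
Your outline reproduces the paper's proof strategy exactly: induction on $k$, the base case handled just as you describe via Lemma \ref{prism} with $q=1$ and the identification $\widehat{\mathcal{S}}(j-i,1)=\{(0,p-i,j-i)\mid i\le p<j\}$, and the inductive step built from the same three ingredients --- Lemma \ref{prism} for $(j-i,k+1)$-shuffles, the Leibniz expansion of $\delta(a^s)$ combined with \eqref{condiotion} applied to the factors $a_{i',j'}$ with $j'-i'<n$, and the bijection between pairs $(s',l)$ with $s'\in\widehat{\mathcal{S}}(j-i,k+1)$ and the single-entry insertions $\coprod_{s}\bigl(\mathcal{S}_0(s)\sqcup\cdots\sqcup\mathcal{S}_k(s)\bigr)$. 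That the paper traverses the regrouping from $\mathrm{RHS}_k$ to $\mathrm{RHS}_{k+1}$ while you go the other way is immaterial.

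The genuine gap is the clause you leave conditional: ``provided the signs match.'' That verification is not a routine afterthought --- it occupies most of the paper's proof and is the only place where the definitions of $\epsilon(s)$ and $\epsilon(k)$ are actually put to work, so deferring it means the induction step is not yet proved. You also predict the wrong mechanism: the paper does not run a case analysis on the parities of $k$, $l$ and $r$. Instead it rewrites $(-1)^{\epsilon(s)}a^s$ by grouping consecutive factors into pairs $b_{2p}=a_{i_{2p}+1,i_{2p+1}}\star\overline{a}_{i_{2p+1}+1,i_{2p+2}}$ (one unpaired last factor when $k$ is even; the whole product written as $\overline{b}_0\star\overline{b}_2\star\cdots\star\overline{b}_{k-1}$ when $k$ is odd) --- the two branches in the definition of $\epsilon(s)$ are precisely the exponents needed for this rewriting. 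Applying the Leibniz rule to the paired product automatically places overlines on all factors to the left of the differentiated one, which is exactly the alternating pattern needed to recognize each resulting term as $(-1)^{\epsilon(t)}$ times the appropriate pullback of $a^t$ for the refined shuffle $t$; what survives is a single global sign depending only on the parity of $k$, and that is what the mod-$4$ pattern of $\epsilon(k)$ absorbs. So only two cases ($k$ even, $k$ odd) are needed, but some such computation must actually be carried out before the lemma is established.
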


\begin{proof}
  We induct on $k$. By Lemma \ref{prism}, we have
  \begin{alignat*}{3}
    \mu_p^*(\overline{a}_{i,p}\star a_{p+1,j})&=P(I^s(0))^*(\mu_p^*(\overline{a}_{i,p}\star a_{p+1,j}))\\
    &=(P(I^s)^*\delta+\delta P(I^s)^*)(\overline{a}_{i,p}\star a_{p+1,j})\\
    &\equiv(-1)^{\epsilon(s)+\epsilon(1)}P(I^s)^*\delta(a^s)&&\mod\mathrm{Im}\,\delta
  \end{alignat*}
  where $s=(0,p-i,j-i)\in\widehat{\mathcal{S}}(j-i,1)$ and $P(I^s(0))$ is the identity map of $K$. Then since $\widehat{\mathcal{S}}(j-i,1)=\{(0,p-i,j-i)\in\mathcal{S}(j-i,1)\mid i\le p<j\}$, the $k=1$ case is proved. We suppose that \eqref{partition} holds for $k$, and consider the $k+1$ case. Let $s=(s_0,\ldots,s_{k+1})\in\widehat{\mathcal{S}}(j-i,k)$. We can define
  \[
    b_0=a_{i_0,i_1}\star\overline{a}_{i_1+1,i_2}\quad\text{and}\quad b_p=a_{i_p+1,i_{p+1}}\star\overline{a}_{i_{p+1}+1,i_{p+2}}
  \]
  for $p$ even. Clearly, we have
  \[
    \overline{b}_0=\overline{a}_{i_0,i_1}\star a_{i_1+1,i_2}\quad\text{and}\quad \overline{b}_p=\overline{a}_{i_p+1,i_{p+1}}\star a_{i_{p+1}+1,i_{p+2}}
  \]
  for $p$ even. Then for $k$ even, we have
  \begin{align*}
    (-1)^{\epsilon(s)}\delta(a^s)&=\delta(b_0\star b_2\star\cdots\star b_{k-2}\star a_{i_k+1,i_{k+1}})\\
    &=\sum_{p=0}^{\frac{k-2}{2}}\overline{b}_0\star\cdots\star\overline{b}_{2p-2}\star(\delta b_{2p})\star b_{2p+2}\star\cdots\star b_{k-2}\star a_{i_k+1,i_{k+1}}\\
    &\quad+\overline{b}_0\star\cdots\star\overline{b}_{k-2}\star(\delta a_{i_k+1,i_{k+1}}).
  \end{align*}
  For $p=0,1,\ldots,k$, let $\mathcal{S}_p(s)$ denote the set of all $(j-i,k+1)$-shuffles of the form $(s_0,\ldots,s_p,t,s_{p+1},\ldots,s_{k+1})$ satisfying $s_p<t<s_{p+1}$. Then we have
  \begin{align*}
    &(\delta b_0)\star b_2\star\cdots\star b_{k-2}\star a_{i_k+1,i_{k+1}}\\
    &=(\delta a_{i_0,i_1})\star\overline{a}_{i_1+1,i_2}\star b_2\star\cdots\star b_{k-2}\star a_{i_k+1,i_{k+1}}\\
    &\quad-\overline{a}_{i_0,i_1}\star\overline{(\delta a_{i_1+1,i_2})}\star b_2\star\cdots\star b_{k-2}\star a_{i_k+1,i_{k+1}}\\
    &=\sum_{l=i_0}^{i_1-1}\mu_l^*(\overline{a}_{i_0,l}\star a_{l+1,i_1})\star\overline{a}_{i_1+1,i_2}\star b_2\star\cdots\star b_{k-2}\star a_{i_k+1,i_{k+1}}\\
    &\quad-\sum_{l=i_1+1}^{i_2-1}\overline{a}_{i_0,i_1}\star \mu_l^*(a_{i_1+1,l}\star\overline{a}_{l+1,i_2})\star b_2\star\cdots\star b_{k-2}\star a_{i_k+1,i_{k+1}}\\
    &=\sum_{t\in\mathcal{S}_0(s)}(-1)^{\epsilon(t)}(\mu_0^{I^t}\star 1_{K^{\star k}})(a^t)-\sum_{t\in\mathcal{S}_1(s)}(-1)^{\epsilon(t)}(1_K\star\mu_1^{I^t}\star 1_{K^{\star(k-1)}})^*(a^t).
  \end{align*}
  Quite similarly, we have
  \begin{align*}
    &\overline{b}_0\star\cdots\star\overline{b}_{2p-2}\star(\delta b_{2p})\star b_{2p+2}\star\cdots\star b_{k-2}\star a_{i_k+1,i_{k+1}}\\
    &=\sum_{t\in\mathcal{S}_{2p}(s)}(-1)^{\epsilon(t)}(1_{K^{\star(2p)}}\star\mu_{2p}^{I^t}\star 1_{K^{\star(k-2p)}})^*(a^t)\\
    &\quad-\sum_{t\in\mathcal{S}_{2p+1}(s)}(-1)^{\epsilon(t)}(1_{K^{\star(2p+1)}}\star\mu_{2p+1}^{I^t}\star 1_{K^{\star(k-2p-1)}})^*(a^t)
  \end{align*}
  for $p=1,2,\ldots,\frac{k-2}{2}$ and
  \begin{align*}
    &\overline{b}_0\star\cdots\star\overline{b}_{k-2}\star(\delta a_{i_k+1,i_{k+1}})=\sum_{t\in\mathcal{S}_k(s)}(-1)^{\epsilon(t)}(1_{K^{\star k}}\star\mu_k^{I^t})^*(a^t).
  \end{align*}
  Then we get
  \[
    (-1)^{\epsilon(s)}\delta(a^s)=\sum_{p=0}^k(-1)^p\sum_{t\in\mathcal{S}_p(s)}(-1)^{\epsilon(t)}(1_{K^{\star p}}\star\mu_p^{I^{t}}\star 1_{K^{\star(k-p)}})^*(a^{t}).
  \]
  For $k$ odd, we can similarly get
  \begin{align*}
    &(-1)^{\epsilon(s)}\delta(a^s)\\
    &=\delta(\overline{b}_0\star\overline{b}_2\star\cdots\star\overline{b}_{k-1})\\
    &=\sum_{p=0}^{\frac{k-1}{2}}b_0\star\cdots\star b_{2p-2}\star(\delta \overline{b}_{2p})\star\overline{b}_{2p+2}\star\cdots\star\overline{b}_{k-1}\\
    &=-\overline{(\delta a_{i_0,i_1})}\star a_{i_1+1,i_2}\star\overline{b}_2\star\cdots\star\overline{b}_{k-1}+\overline{a}_{i_0,i_1}\star(\delta a_{i_1+1,i_2})\star\overline{b}_2\star\cdots\star\overline{b}_{k-1}\\
    &\quad+\sum_{p=1}^{\frac{k-1}{2}}(-b_0\star\cdots\star b_{2p-2}\star\overline{(\delta a_{i_{2p}+1,i_{2p+1}})}\star a_{i_{2p+1}+1,i_{2p+2}}\star\overline{b}_{2p+2}\star\cdots\star\overline{b}_{k-1}\\
    &\quad+b_0\star\cdots\star b_{2p-2}\star a_{i_{2p}+1,i_{2p+1}}\star(\delta a_{i_{2p+1}+1,i_{2p+2}})\star\overline{b}_{2p+2}\star\cdots\star\overline{b}_{k-1})\\
    &=-\sum_{p=0}^k(-1)^p\sum_{t\in\mathcal{S}_p(s)}(-1)^{\epsilon(t)}(1_{K^{\star p}}\star\mu_p^{I^{t}}\star 1_{K^{\star(k-p)}})^*(a^{t}).
  \end{align*}
  Observe that the map
  \[
    \widehat{\mathcal{S}}(j-i,k+1)\times\{0,1,\ldots,k\}\to\coprod_{s\in\widehat{\mathcal{S}}(j-i,k)}(\mathcal{S}_0(s)\sqcup\mathcal{S}_1(s)\sqcup\cdots\sqcup\mathcal{S}_k(s))
  \]
  sending $(t,p)\in\widehat{\mathcal{S}}(j-i,k+1)\times\{0,1,\ldots,k\}$ to $t\in\mathcal{S}_p(s)$ is bijective, where $s=(t_0,\ldots,t_{p},t_{p+2},\ldots,t_{k+2})\in\widehat{\mathcal{S}}(j-i,k)$ for $t=(t_0,\ldots,t_{k+2})\in\widehat{\mathcal{S}}(j-i,k+1)$. Then we get
  \begin{align*}
    &\sum_{p=i}^{j-1}\mu_p^*(\overline{a}_{i,p}\star a_{p+1,j})\\
    &\equiv\sum_{s\in\widehat{\mathcal{S}}(j-i,k)}(-1)^{\epsilon(s)+\epsilon(k)}P(I^s)^*\delta(a^s)\\
    &\equiv\sum_{s\in\widehat{\mathcal{S}}(j-i,k)}\sum_{p=0}^k(-1)^p\sum_{t\in\mathcal{S}_p(s)}(-1)^{\epsilon(t)+\epsilon(k+1)}P(I^t(p))^*(1_{K^{\star p}}\star\mu_p^{I^t}\star 1_{K^{\star(k-p)}})^*(a^t)\\
    &\equiv\sum_{t\in\widehat{\mathcal{S}}(j-i,k+1)}(-1)^{\epsilon(t)+\epsilon(k+1)}\sum_{p=0}^k(-1)^pP(I^t(p))^*(1_{K^{\star p}}\star\mu_p^{I^t}\star 1_{K^{\star(k-p)}})^*(a^t)
  \end{align*}
  modulo $\mathrm{Im}\,\delta$. By Lemma \ref{prism}, we also get
  \begin{align*}
    &\sum_{t\in\widehat{\mathcal{S}}(j-i,k+1)}(-1)^{\epsilon(t)+\epsilon(k+1)}\sum_{p=0}^k(-1)^pP(I^t(p))^*(1_{K^{\star p}}\star\mu_p^{I^t}\star 1_{K^{\star(k-p)}})^*(a^t)\\
    &\equiv\sum_{t\in\widehat{\mathcal{S}}(j-i,k+1)}(-1)^{\epsilon(t)+\epsilon(k+1)}(P(I^t)^*\delta+(-1)^k\delta P(I^t)^*)(a^t)\\
    &\equiv\sum_{t\in\widehat{\mathcal{S}}(j-i,k+1)}(-1)^{\epsilon(t)+\epsilon(k+1)}P(I^t)^*\delta(a^t)
  \end{align*}
  modulo $\mathrm{Im}\,\delta$. Thus the induction on $k$ proceeds, completing the proof.
\end{proof}

Consider the identity \eqref{partition} for $k=j-i$. Observe that $\widehat{\mathcal{S}}(j-i,j-i)$ consists of a single element $(0,0,1,\ldots,j-i)$, and $a^{(0,0,1,\ldots,j-i)}=a_{i,i}\star\cdots\star a_{j,j}$ which is a cocycle because each $a_{k,k}$ is a cocycle. Then we obtain
\begin{align*}
  \sum_{k=i}^{j-1}\mu_{k}^*(\overline{a}_{i,k}\star a_{k+1,j})&\equiv(-1)^{\epsilon((0,0,1,\ldots,j-i))+\epsilon(j-i+1)}P(I^{(0,0,1,\ldots,j-i)})\delta(a_{i,i}\star\cdots\star a_{j,j})\\
  &\equiv 0
\end{align*}
modulo $\mathrm{Im}\,\delta$. Hence there is $a_{i,j}\in\widetilde{C}^*(K)$ satisfying \eqref{condiotion}, and so the induction on $j-i$ proceeds by Lemmas \ref{j-i=1} and \ref{decomposition}. Thus by Lemma \ref{i-j=0}, we get:

\begin{lemma}
  \label{a}
  If $K$ is $\mathbb{F}$-tight, then there are $a_{ij}\in\widetilde{C}^*(K)$ for $0\le i\le j\le q$ satisfying the condition of Lemma \ref{triviality}.
\end{lemma}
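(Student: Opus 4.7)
The plan is to construct the cochains $a_{i,j}$ recursively by induction on $j-i$, relying entirely on the preparatory lemmas already in place.

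The base case $j - i = 0$ is exactly the content of Lemma \ref{i-j=0}, and this is the only point at which the $\mathbb{F}$-tightness hypothesis enters the argument: tightness gives surjectivity of $j_{I_i}^*$ in cohomology, which lets us lift each $\alpha_i$ to a cocycle $a_{i,i} \in \widetilde{C}^*(K)$. The case $j - i = 1$ is supplied by Lemma \ref{j-i=1}, which uses the single-variable higher prism identity from Lemma \ref{prism}.

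For the inductive step, assume the required $a_{i',j'}$ has been constructed for all pairs with $j' - i' < n$, and fix $(i,j)$ with $j - i = n \ge 2$. I would apply Lemma \ref{decomposition} with $k$ chosen to be the maximal value $k = j-i = n$. The set $\widehat{\mathcal{S}}(n,n)$ contains a unique shuffle $s^{*} = (0,0,1,2,\ldots,n)$, and the associated cochain
\[
  a^{s^*} = a_{i,i} \star a_{i+1,i+1} \star \cdots \star a_{j,j}
\]
is a cocycle, being a $\star$-product of cocycles. Consequently $\delta(a^{s^*}) = 0$, and Lemma \ref{decomposition} yields
\[
  \sum_{p=i}^{j-1} \mu_p^*(\overline{a}_{i,p} \star a_{p+1,j}) \equiv 0 \pmod{\mathrm{Im}\,\delta}.
\]
Choosing any $a_{i,j} \in \widetilde{C}^*(K)$ whose coboundary equals this sum produces the next element in the inductive construction, and the process terminates at $(i,j) = (0,q)$.

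The essential obstacle has already been overcome by Lemma \ref{decomposition}, whose proof is the only subtle step: it repeatedly applies the higher prism identity of Lemma \ref{prism} to shuffle the alternating sum through shuffles in $\widehat{\mathcal{S}}(j-i,k)$ until one reaches the top level $k = j - i$, where a single cocycle $a^{s^*}$ appears and collapses the expression modulo coboundaries. Once that combinatorial identity is available, the proof of Lemma \ref{a} reduces to assembling the base case from tightness with the inductive vanishing from Lemma \ref{decomposition}.
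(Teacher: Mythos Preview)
Your proposal is correct and follows essentially the same approach as the paper: induct on $j-i$, with the base case supplied by Lemma~\ref{i-j=0} (the sole use of tightness), and for the inductive step apply Lemma~\ref{decomposition} at the top value $k=j-i$, where $\widehat{\mathcal{S}}(j-i,j-i)=\{(0,0,1,\ldots,j-i)\}$ and $a^{s^*}=a_{i,i}\star\cdots\star a_{j,j}$ is a cocycle, forcing the sum $\sum_p \mu_p^*(\overline{a}_{i,p}\star a_{p+1,j})$ into $\mathrm{Im}\,\delta$. This is precisely the argument the paper gives in the paragraph preceding Lemma~\ref{a}.
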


Finally, by Lemma \ref{triviality}, we obtain:

\begin{proposition}
  \label{support V(K)}
  If $K$ is an $\mathbb{F}$-tight complex, then every Massey product in $H^*(\mathcal{C}^*(K))$ with support $V(K)$ is trivial whenever it is defined.
\end{proposition}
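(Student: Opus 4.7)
The plan is to assemble the machinery that has been built up in this section, since all of the real work has already been packaged into the preceding lemmas. Given a Massey product $\langle \alpha_0, \ldots, \alpha_q \rangle$ in $H^*(\mathcal{C}^*(K))$ with support $V(K)$, where each $\alpha_i \in \widetilde{H}^*(K_{I_i})$ and $I_0 \cup \cdots \cup I_q = V(K)$, I would first split into two cases according to whether the subsets $I_i$ are pairwise disjoint.

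If the $I_i$ are not pairwise disjoint, that is, $I_i \cap I_j \ne \emptyset$ for some $i < j$, then Lemma \ref{non-disjoint} immediately yields the triviality of the Massey product whenever it is defined. This case does not require the tightness hypothesis and is a clean invocation.

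Assume now that $I_0, \ldots, I_q$ are pairwise disjoint, so $V(K) = I_0 \sqcup \cdots \sqcup I_q$. In this setting, Lemma \ref{a} — whose proof uses the $\mathbb{F}$-tightness of $K$ through the base cases in Lemmas \ref{i-j=0} and \ref{j-i=1} together with the inductive Lemma \ref{decomposition} — produces cochains $\{a_{i,j}\}_{0 \le i \le j \le q}$ in $\widetilde{C}^*(K)$ satisfying the hypotheses of Lemma \ref{triviality}. Applying Lemma \ref{triviality} to this collection then shows that $\langle \alpha_0, \ldots, \alpha_q \rangle$ is defined and contains $0$, which is precisely the triviality assertion.

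The substantive obstacle has already been surmounted earlier: the main difficulty is the recursive construction of the $a_{i,j}$ in Lemma \ref{a}, which relies on Lemma \ref{decomposition} to express $\sum_{p=i}^{j-1} \mu_p^*(\overline{a}_{i,p} \star a_{p+1,j})$, modulo coboundaries, as a sum involving the higher prism operators $P(I^s)$ applied to $\delta(a^s)$, and ultimately reduces to the observation that $a_{i,i} \star \cdots \star a_{j,j}$ is a cocycle. The proof of the proposition itself is then simply the combination of these two lemmas with Lemma \ref{non-disjoint}.
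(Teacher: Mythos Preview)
Your proposal is correct and follows essentially the same route as the paper: the paper also splits into the non-disjoint case (handled by Lemma~\ref{non-disjoint}) and the disjoint case, where Lemma~\ref{a} supplies the cochains $a_{i,j}$ and Lemma~\ref{triviality} then yields triviality. One minor remark: $\mathbb{F}$-tightness enters only in Lemma~\ref{i-j=0} (to lift each $\alpha_i$ to a cocycle on all of $K$); Lemma~\ref{j-i=1} and Lemma~\ref{decomposition} are purely formal consequences of the higher prism machinery once the $a_{i,i}$ are in hand.
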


Now we are ready to prove Theorem \ref{main}.

\begin{proof}
  [Proof of Theorem \ref{main}]
  Induct on the cardinality $|V(K)|$. For $|V(K)|=1$, then $K$ is obviously $\mathbb{F}$-Golod. We assume that all Massey products in $H^*(\mathcal{C}^*(K))$ are trivial for $|V(K)|<m$, and we consider the $|V(K)|=m$ case. By definition, the class of $\mathbb{F}$-tight complexes are closed under taking full subcomplexes. Then by Lemma \ref{full subcomplex} and the induction hypothesis, it is sufficient to show that all Massey products in $H^*(\mathcal{C}^*(K))$ with support $V(K)$ are trivial. Thus by Proposition \ref{support V(K)}, the proof is finished.
\end{proof}

\end{document}